\newcommand{\Cay}{\mathop{\mathrm{Cay}}}
\newcommand{\Aut}{\mathop{\mathrm{Aut}}}
\def\Zent#1{{\bf Z}({{#1}})}
\def\Z#1{{\bf Z}({{#1}})}
\def\cent#1#2{{\bf C}_{{#1}}({{#2}})}
\def\centm#1#2{{\bf C}_{{#1}}^{-}({{#2}})}
\def\nor#1#2{{\bf N}_{{#1}}({{#2}})}
\numberwithin{equation}{section}
\newtheorem{theorem}{Theorem}[section]
\newtheorem{lemma}[theorem]{Lemma}
\newtheorem{corollary}[theorem]{Corollary}
\newtheorem{conjecture}[theorem]{Conjecture}
\newtheorem*{facts}{Facts}
\theoremstyle{definition}
\newtheorem{notation}[theorem]{Notation}
\newtheorem{remark}[theorem]{Remark}
\begin{document}
\title[Bipartite Cayley graphs]{A conjecture on bipartite graphical regular representations}

\author{Jia-Li Du}
\address{Jia-Li Du, Department of Mathematics, China University of Mining and Technology, Xuzhou 221116, China}
\email{dujl@cumt.edu.cn}

\author{Yan-Quan Feng}
\address{Yan-Quan Feng, Department of Mathematics, Beijing Jiaotong University, Beijing 100044, China}
\email{yqfeng@bjtu.edu.cn}

\author{Pablo Spiga}
\address{Pablo Spiga, Dipartimento di Matematica e Applicazioni, University of Milano-Bicocca, Via Cozzi 55, 20125 Milano, Italy}
\email{pablo.spiga@unimib.it}
\begin{abstract}
In this paper we are concerned with the classification of the finite groups admitting a bipartite DRR and a bipartite GRR.

First, we find a natural obstruction in a finite group for not admitting a bipartite GRR. Then we give a complete classification of the finite groups satisfying this natural obstruction and hence not admitting a bipartite GRR.
Based on these results and on some extensive computer
computations, we state a conjecture aiming to give a complete classification of
the finite groups admitting a bipartite GRR.

Next, we prove the existence of bipartite DRRs for most of the finite groups not admitting a bipartite GRR found in this paper. Actually, we prove a much stronger result: we give an asymptotic enumeration of the bipartite DRRs over these groups. Again, based on these results and on some extensive computer
computations, we state a conjecture aiming to give a complete classification of
the finite groups admitting a bipartite DRR.

\smallskip

\noindent\textbf{Keywords:} regular representation, DRR, GRR,  bipartite (di)graph, Cayley digraph, automorphism group
\end{abstract}
\subjclass[2010]{Primary 05C25; Secondary 05C20, 20B25}

\maketitle
\section{Introduction}
Let $R$ be a group and let $S$ be a subset of $R$. The \textit{Cayley digraph} on $R$ with
connection set $S$, denoted by $\Cay(R, S)$
in this paper, is the digraph with vertex-set $R$ and with
$(g, h)$ being an arc if and only if $gh^{-1} \in S$. Actually, $\Cay(R, S)$ is a graph if and only if $S$ is inverse-closed (that is, $S^{-1}:=\{s^{-1}\mid s\in S\}=S$), in which
case it is called a \textit{Cayley graph}. It was already observed by Cayley that the group $R$ acts regularly as a
group of automorphisms on $\Cay(R, S)$ by right multiplication. Therefore, we may identify $R$ as a subgroup of the automorphism group $\Aut(\Cay(R,S))$ of $\Cay(R,S)$.

When $R$ equals $\Aut(\Cay(R,S))$, $\Cay(R, S)$
is called a \textit{DRR} (for \textit{digraphical regular representation}). A DRR which is a graph
is called a \textit{GRR} (for \textit{graphical regular representation}).
DRRs and GRRs have been widely studied~\cite{babai1,Godsil,IW,nowitz1,nowitz2,nowitz3,nowitz4,W2,W3}, together with their friends: ORRs~\cite{morrisspiga,morrisspiga1,spiga} (oriented regular representations), TRRs~\cite{babai2} (tournament regular representations) and DFRs/GFRs~\cite{WT1,spiga0,spiga1} (digraphical/graphical Frobenius representations).

The aim of  this paper is to formulate ``running conjectures'' for the classification of the  bipartite DRRs and  bipartite GRRs and to make some progress towards these conjectures. We start by discussing some motivation for embarking into this task. The main techniques developed in the DRR and GRR classification (see for instance~\cite{babai1,Godsil,Hetzel}) involve a local analysis on the neighbourhood of a Cayley (di)graph. The easiest instance of this situation is probably in~\cite{babai1}; in this paper, for most finite groups $R$, Babai constructs a subset $S$ of $R$ with the property that $\Cay(R,S)$ is connected and with the property that the subgraph induced by $\Cay(R,S)$ on the neighbourhood of the identity is asymmetric. These two conditions imply that $\Cay(R,S)$ is a DRR, see for instance~\cite{nowitz1}. Then Babai is left to deal with the exceptional groups where his subset $S$ cannot be constructed. Broadly speaking, the same idea is constant throughout the investigation of GRRs, TRRs and ORRs. The investigation of DFRs and GFRs does not follow this pattern and require more algebraic tools. Similarly, the classification of bipartite DRRs and GRRs cannot follow this idea because the subgraph induced by $\Cay(R,S)$ on the neighbourhood of the identity is the empty graph, which brings no information.  Therefore, besides the natural interest in our opinion in classifying bipartite DRRs and bipartite GRRs (after all they are natural classes of graphs), we propose this problem for testing the new techniques that have been developed over the last 50 years in the study of graphical representations of groups.

After a long series of papers, Godsil~\cite{Godsil} has proved that, besides an explicit  list of exceptions, a group $R$ admits a GRR if and only if $R$ does not admit a non-identity automorphism $\varphi$ with $g^\varphi\in \{g,g^{-1}\}$, for every $g\in R$. Clearly, groups admitting such an automorphism $\varphi$ cannot admit a GRR because $\varphi$ is a non-identity automorphism for every Cayley graph $\Cay(R,S)$ over $R$.  Therefore, having a non-identity automorphism $\varphi$ with $g^\varphi\in \{g,g^{-1}\}$, for every $g\in R$, is a natural obstruction for a finite group $R$ for having a GRR. The GRR classification shows that this is indeed the only obstruction, besides a short list of small groups. It turns out that the only groups $R$ admitting such an automorphism are abelian groups of exponent greater than $2$ and generalised dicyclic groups. We believe that the same pattern holds for bipartite GRRs.

Let $R$ be a finite group and let $M$ be a subgroup of $R$ having index $2$. We say that the pair $(R,M)$ admits a bipartite GRR if there exists an inverse-closed subset $S$ of $R\setminus M$ with $R=\Aut(\Cay(R,S))$.  If $R$ has a non-identity automorphism $\varphi$ with $g^\varphi\in \{g,g^{-1}\}$, for every $g\in R\setminus M$, then $(R,M)$ cannot admit a bipartite GRR  because $\varphi$ is a non-identity automorphism for every bipartite Cayley graph $\Cay(R,S)$ over $R$ having bipartition $\{M,R\setminus M\}$. In particular, this is a natural obstruction for $(R,M)$ for admitting a bipartite GRR.  The first main result of our paper classifies the pairs $(R,M)$ satisfying this obstruction and hence not admitting a bipartite GRR. (We refer to Notation~\ref{notnot:1} for unexplained notation or terminology.)

\begin{theorem}\label{thrm:11}
Let $R$ be a finite group and let $M$ be a subgroup of $R$ with $|R:M|=2$. There exists a non-identity automorphism $\varphi$ of $R$ with $g^\varphi\in \{g,g^{-1}\}$, for every $g\in R\setminus M$, if and only if one of the following holds:
\begin{enumerate}
\item\label{thrmeq:1} $M$ is abelian and $R$ is not generalized dihedral on $M$;
\item\label{thrmeq:2} $M$  contains an abelian subgroup $Z$ with $|M:Z|=2$ and there exists $a\in R\setminus M$ with $a^2\ne 1$, $a^2\in Z\cap \Zent R$ and $z^a=z^{-1}$, for every $z\in Z$.
\item\label{thrmeq:3}$|M:\Zent M|=4$ and $\gamma_2(M)=\langle a^2\rangle$ for some $a\in R\setminus M$  such that $o(a)=4$, $z^a=z^{-1}$ for every $z\in \Zent M$, and $o(am)\ne 2$ for some $m\in M\setminus \Zent M$.
\end{enumerate}
\end{theorem}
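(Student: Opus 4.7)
The plan is to prove the two implications separately, handling the reverse direction by explicit constructions and the forward direction by a structural case analysis. For the reverse direction in case~\eqref{thrmeq:1}, I would split according to whether $R$ is abelian: if it is, then inversion $\iota\colon g\mapsto g^{-1}$ is an automorphism, and the exclusion of the generalised dihedral configuration (which for abelian $R$ reduces to excluding elementary abelian $2$-groups) ensures $\iota\ne\mathrm{id}$; if $R$ is non-abelian with $M$ abelian, I pick $a\in R\setminus M$ and analyse the involutive automorphism $\tau_a$ of $M$ induced by conjugation by $a$, noting that $\tau_a(a^2)=a^2$ forces $a^4=1$ whenever $\tau_a$ acts as inversion. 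Depending on $\tau_a$, one then takes $\varphi$ either to fix $M$ pointwise while inverting every element of $R\setminus M$, or to invert $M$ while fixing a chosen outside element. For case~\eqref{thrmeq:2}, the hypothesis $a^2\in Z\cap\Zent{R}$ together with $z^a=z^{-1}$ similarly forces $a^4=1$, and I define $\varphi$ by inverting $Z$, fixing a chosen $y\in M\setminus Z$, and fixing $a$; the consistency checks reduce to the centrality of $a^2$ and the inversion relation on $Z$. For case~\eqref{thrmeq:3}, the rigid structure $|M:\Zent{M}|=4$, $\gamma_2(M)=\langle a^2\rangle$ of order $2$ leaves very little freedom, and $\varphi$ is built from inversion on $\Zent{M}$ together with the action of $a$ on a transversal of $\Zent{M}$ in $M$; the hypothesis that $o(am)\ne 2$ for some $m\in M\setminus\Zent{M}$ is invoked precisely to guarantee $\varphi\ne\mathrm{id}$.

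For the forward direction, I assume $\varphi$ exists and partition $R\setminus M$ into $A=\{g\in R\setminus M : g^\varphi=g\}$ and $B=\{g\in R\setminus M : g^\varphi=g^{-1}\}$, noting that $A\cap B$ is exactly the set of involutions of $R\setminus M$. Writing $\alpha:=\varphi|_M$, the identity $\varphi(gh)=\varphi(g)\varphi(h)$ applied to products $gh\in M$ with $g,h\in R\setminus M$ yields four constraint types (one per choice of $g,h$ in $A$ or $B$), and combining these with $\varphi(mg)=\alpha(m)\varphi(g)$ strongly restricts both $\alpha$ and the partition $\{A,B\}$. A first observation is that $B\setminus A$ must be non-empty, since otherwise $\varphi(gm)=g\varphi(m)=gm$ forces $\alpha=\mathrm{id}$ and hence $\varphi=\mathrm{id}$, contradicting the hypothesis. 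When every non-involution of $R\setminus M$ lies in $B$, a short calculation shows that $M$ is abelian and $R$ is not generalised dihedral on $M$, placing us in case~\eqref{thrmeq:1}. In the mixed case, fixing representatives $a_0\in A\setminus B$ and $b_0\in B\setminus A$ of order greater than $2$ and analysing the element $a_0^{-1}b_0\in M$ under $\alpha$ reveals the centraliser structure of $R$, producing either the abelian subgroup $Z$ of index $2$ in $M$ demanded by case~\eqref{thrmeq:2} or the rigid centre-index-$4$ configuration of case~\eqref{thrmeq:3}.

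The main obstacle will be the final mixed-case analysis, and in particular the step where case~\eqref{thrmeq:3} is isolated: one has to rule out $|M:\Zent{M}|\ge 8$, identify $\gamma_2(M)$ with $\langle a^2\rangle$ for a suitable $a\in R\setminus M$ of order~$4$, and verify that the exceptional condition ``$o(am)\ne 2$ for some $m\in M\setminus\Zent{M}$'' is both necessary and sufficient. This demands a careful commutator calculus inside $M$ combined with an analysis of how outside elements act, and is where I expect the bulk of the technical work to lie.
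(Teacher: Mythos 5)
Your overall architecture (explicit automorphisms for sufficiency; a case analysis on the partition of $R\setminus M$ into fixed and inverted elements for necessity) is the same as the paper's, but the explicit constructions you propose for the sufficiency direction do not work as stated. In case~\eqref{thrmeq:1}, neither of your two options is viable in general: the map fixing $M$ pointwise and inverting $R\setminus M$ is multiplicative only when every $g\in R\setminus M$ inverts $M$ by conjugation (check $(mg)^\varphi$ against $m^\varphi g^\varphi$), while the map inverting $M$ and fixing a chosen $a$ sends $ma$ to $m^{-1}a$, which is in general neither $ma$ nor $(ma)^{-1}=(m^{a^{-1}})^{-1}a^{-1}$, so the defining condition on $R\setminus M$ fails. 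The automorphism that actually works is $g\mapsto g^{-1}$ on all of $R\setminus M$ together with $m\mapsto (m^{a})^{-1}$ on $M$ for a fixed $a\in R\setminus M$ (i.e.\ inversion precomposed with conjugation, not plain inversion), and verifying multiplicativity uses the commutativity of $M$ in an essential way. Your case~\eqref{thrmeq:2} construction has the same defect: with $z^a=z^{-1}$ one computes $(za)^{-1}=za^{-1}$, so ``invert $Z$, fix $a$'' sends $za\in R\setminus M$ to $z^{-1}a$, which lies in $\{za,(za)^{-1}\}$ only if $z^2\in\{1,a^2\}$. The correct map here is the identity on $Z\cup Zam$, inversion on $Za$, and multiplication by the central involution $a^2$ on $M\setminus Z$ --- in particular it is \emph{not} inversion on $Z$. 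Case~\eqref{thrmeq:3} requires a similarly non-obvious piecewise definition ($g\mapsto a^{-1}g^{-1}a^{-1}$ on $M\setminus \Zent M$, identity on $\Zent M\cup \Zent M a$, inversion elsewhere), and its verification is the longest computation in the proof; ``built from inversion on $\Zent M$ together with the action of $a$'' is not yet a construction.

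In the necessity direction, your opening moves are sound and agree with the paper: if every element of $R\setminus M$ is inverted, then $m^\varphi=(m^{a})^{-1}$ for every $a\in R\setminus M$, and the independence of this expression from $a$ forces $M$ to be abelian, giving case~\eqref{thrmeq:1}. But the entire content of cases~\eqref{thrmeq:2} and~\eqref{thrmeq:3} lives in the mixed case, and ``analysing the element $a_0^{-1}b_0$ under $\alpha$ reveals the centraliser structure'' is a placeholder, not an argument. What is actually needed: set $Z:=M\cap\cent R\varphi$ and pick $a\in\cent R\varphi\setminus M$; deduce that conjugation by $am$ inverts $Z$ for every $m\in M\setminus Z$ (so $Z$ is abelian), and derive the key identity $a^2=[(m_iz_i)^{-1},(m_jz_j)^{-1}]$ for representatives of distinct nontrivial cosets of $Z$ in $M$; then split on $\kappa:=|M:Z|$. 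For $\kappa=2$ one must still prove $(am_1)^2\neq 1$ and $(am_1)^2\in\Zent R$ to land in case~\eqref{thrmeq:2}; for $\kappa\geq 3$ the identity forces $Z=\Zent M$, $o(a)=4$, $\gamma_2(M)=\langle a^2\rangle$, $M/Z$ elementary abelian, and finally $\kappa=4$ (the case $\kappa>4$ is killed by a commutator expansion yielding $a^2=1$), with the condition $o(am)\neq 2$ coming from the non-triviality of $\varphi$. None of this is routine bookkeeping, and your proposal defers exactly this part; as it stands the forward implication is not proved.
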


Based on some extensive computer computations (see Remark~\ref{remark1} for details on these computations) we dare to propose the following conjecture:
\begin{conjecture}\label{conj:1}{\rm
Let $R$ be a finite group with a subgroup $M$ having index $2$. Then one of the following holds:
\begin{enumerate}
\item $(R,M)$ admits a bipartite GRR, or
\item $(R,M)$ satisfies part~\eqref{thrmeq:1},~\eqref{thrmeq:2} or~\eqref{thrmeq:3} in Theorem~\ref{thrm:11}, or
\item $R$ is one of the groups listed in Table~\ref{table:2}.
\end{enumerate}}
\end{conjecture}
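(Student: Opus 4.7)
The conjecture naturally splits into a small-cases verification and an asymptotic bulk. For groups $R$ of bounded order, the exceptional list in Table~\ref{table:2} is to be obtained by an exhaustive computer search; the core of the mathematical argument is then to show that every pair $(R,M)$ with $|R|$ sufficiently large which fails to satisfy any of conditions~\eqref{thrmeq:1}--\eqref{thrmeq:3} of Theorem~\ref{thrm:11} admits a bipartite GRR. Thus, granting Theorem~\ref{thrm:11}, the plan is to construct, for each such $(R,M)$, an inverse-closed subset $S\subseteq R\setminus M$ with $\Aut(\Cay(R,S))=R$.

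I would split along the structure of $M$ as dictated by the negation of the obstruction. Condition~\eqref{thrmeq:1} leaves only the case of abelian $M$ with $R$ generalized dihedral on $M$; conditions~\eqref{thrmeq:2} and~\eqref{thrmeq:3} rule out the remaining ``almost-abelian inverted'' configurations for $M$. Accordingly, I would treat separately (a) $R$ generalized dihedral on an abelian $M$, and (b) $M$ non-abelian avoiding both~\eqref{thrmeq:2} and~\eqref{thrmeq:3}. In each regime the construction follows a Babai--Godsil style counting strategy: estimate the number of inverse-closed subsets $S\subseteq R\setminus M$ and compare it to the number of ``bad'' pairs $(S,\varphi)$, where $\varphi$ is a non-identity permutation of $R$ fixing $S$ setwise and either stabilizing or swapping the two parts. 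A dominance of the former count over the latter yields a bipartite GRR by the probabilistic method.

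Since the classical local trick on the neighbourhood of the identity is useless in the bipartite setting (that neighbourhood induces an empty graph), I would replace it by the colour invariant $|\vGa(m_1)\cap\vGa(m_2)|$ for pairs $m_1,m_2\in M$, which is preserved by any graph automorphism that fixes the bipartition. With a careful choice of $S$ these colour classes separate enough pairs to force the stabilizer of the identity in $\Aut(\Cay(R,S))$ to act trivially on $M$, and an unbalanced-valency or parity argument on appropriate subsets of cosets prevents the two parts of the bipartition from being interchanged. Reducing the enumeration of bad pairs to counting automorphisms of $R$ and bipartition-swapping involutions would then proceed by the usual orbit-by-orbit bookkeeping, bounding the number of $S$ fixed by each candidate $\varphi$ by $2^{c(\varphi)/2}$ for $c(\varphi)$ the number of orbits of $\langle\varphi\rangle$ on inverse-closed subsets of $R\setminus M$.

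The main obstacle, in my view, is ruling out ``accidental'' graph automorphisms that neither preserve the bipartition nor come from group automorphisms of $R$: the bipartite setting affords more freedom than the unipartite one, especially when $R$ is generalized dihedral on $M$, where $g\mapsto g^{-1}$ already leaves many natural $S$'s invariant as a set. Controlling these automorphisms uniformly across the family of pairs $(R,M)$, and simultaneously identifying the complete list of small exceptions populating Table~\ref{table:2}, is where I expect the technical heart of the conjecture to lie; indeed, even the analogous non-bipartite classifications of Godsil~\cite{Godsil} required a long series of papers, and the bipartite version inherits all of those difficulties together with the extra flexibility of the bipartition.
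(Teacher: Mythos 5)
The statement you are asked about is a \emph{conjecture}: the paper does not prove it, and offers as evidence only Theorem~\ref{thrm:11} (the classification of pairs $(R,M)$ carrying the natural obstruction) together with an exhaustive computer search over small orders described in Remark~\ref{remark1}, which is how Table~\ref{table:2} was produced. Your text is likewise not a proof but a research program: every substantive step --- the construction of $S$, the enumeration of bad pairs $(S,\varphi)$, the control of automorphisms of $\Cay(R,S)$ that do not normalise $R$ or that swap the two parts, and the identification of the exceptional list --- is announced rather than carried out. So there is a genuine gap in the sense that nothing is actually established beyond what Theorem~\ref{thrm:11} already gives; the honest conclusion is that the statement remains open, and you should say so rather than present a plan as if it were on the way to a proof.

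That said, it is worth flagging where your plan would run into concrete trouble, because the difficulties are sharper than your sketch suggests. The paper's asymptotic argument for the DRR analogue (Theorem~\ref{thrm:1}) leans on two things that are unavailable here: (i) the total count $2^{|R|/2}$ of \emph{all} subsets of $R\setminus M$, whereas for GRRs one only has the inverse-closed subsets, of which there may be as few as $2^{|R|/4}$ when $R\setminus M$ contains no involutions --- at that point even the single bound of Lemma~\ref{l:1} ($2^{3|R|/8}$ fixed sets for one automorphism $\varphi$) already swamps the total, so the naive union bound cannot close; and (ii) the structural theorem \cite[Theorem~3.2]{DSV}, which requires $M$ to be an \emph{abelian} regular subgroup and hence says nothing about the non-abelian $M$ arising in your case (b). Your proposed colour invariant $|\vGa(m_1)\cap\vGa(m_2)|$ is a reasonable substitute for the useless neighbourhood-of-the-identity argument, but turning it into a uniform bound on bad pairs across all $(R,M)$ avoiding conditions~\eqref{thrmeq:1}--\eqref{thrmeq:3} is precisely the open problem. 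If you want to contribute evidence rather than a plan, the two things actually within reach are: a correct statement that conditions~\eqref{thrmeq:1}--\eqref{thrmeq:3} are genuine obstructions (which is the forward direction of Theorem~\ref{thrm:11}), and a finite verification for small $|R|$ reproducing Table~\ref{table:2}.
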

The first column in Table~\ref{table:2} gives the order of $R$ and the second column gives the number of $R$ in the database of \texttt{SmallGroups} in the computer algebra system \textsf{GAP}, version 4.7.6.

\begin{table}[!ht]
\begin{tabular}{|c|c|c|}\hline
Grp. Order& Grp. Name\\\hline
 4& 2\\
 6& 1 \\
 8& 3, 5\\
 10& 1 \\
 12& 4 \\
 14& 1 \\
 16& 7, 8, 11, 13, 14\\
 18& 4 \\
 20& 3, 4 \\
 24& 13\\
 32& 6, 7, 8,  11, 15, 22, 31, 34, 37\\
&  38, 43, 44, 46, 48, 49, 50, 51\\
 36&9, 13 \\
 48& 46 \\
 54& 10\\
64&197, 198,   199,  200, 210, 214, 220,  222\\
&  223,
  224,
  225,  229,  230,  232,  235,  237\\
&  238,
  239,
  240,
  244,
  245, 267\\\hline
\end{tabular}
\caption{Exceptional small groups not admitting a bipartite GRR\label{table:2}}
\end{table}

From Theorem~\ref{thrm:11} part~\eqref{thrmeq:1}, we see that when $M$ is abelian, $(R,M)$ has no bipartite GRR unless (possibly) when $R$ is generalised dihedral on
$M$. Next, in this paper we take a closer look at these pairs and we investigate the existence of bipartite DRRs.

\begin{theorem}\label{thrm:1}Let $R$ be a finite group  with an abelian subgroup $M$ having index $2$. The number of subsets $S$ of $R\setminus  M$ such that $\Cay(R,S)$  is a bipartite $\mathrm{DRR}$  is at least $$2^{\frac{|R|}{2}}-5\cdot 2^{\frac{3|R|}{8}+\log_2|R|\cdot (\log_2(|R|/2))}.$$
\end{theorem}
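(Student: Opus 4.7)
The plan is to follow the by-now-standard union-bound strategy for enumeration theorems of regular representations, adapted to the bipartite setting. Write $n:=|R|/2=|R\setminus M|$, so that the total number of candidate sets $S\subseteq R\setminus M$ is $2^n$. Call $S$ \emph{bad} if $\Aut(\Cay(R,S))\ne R$; the goal is to show that at most $5\cdot 2^{3n/4+\log_2(2n)\log_2 n}$ sets are bad. Since $R$ acts regularly on itself by right multiplication, $S$ is bad exactly when the stabiliser $A_1(S)$ of $1$ in $A(S):=\Aut(\Cay(R,S))$ is non-trivial.

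Pick any $\sigma\in A_1(S)\setminus\{1\}$. Because $\Cay(R,S)$ is bipartite with bipartition $\{M,R\setminus M\}$ and $\sigma(1)=1\in M$, $\sigma$ preserves both parts setwise; moreover, applying the graph-automorphism property to out-neighbourhoods gives $\sigma(Sg)=S\sigma(g)$ for all $g\in R$, and in particular $\sigma(S)=S$. Consequently $S$ is a union of orbits of $\sigma$ on $R\setminus M$, so the number of bad $S$ compatible with a given $\sigma$ is at most $2^{c(\sigma)}$, where $c(\sigma)$ is the orbit count of $\sigma$ on $R\setminus M$. Setting $x:=|\mathrm{fix}(\sigma)\cap(R\setminus M)|$, one has $c(\sigma)\le x+(n-x)/2=n/2+x/2$, so whenever $x\le n/2$ one gets $c(\sigma)\le 3n/4=3|R|/8$, which is precisely the main exponent of the theorem. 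To bound the number of candidate $\sigma$'s, I would use a base argument: since $M$ is abelian and acts regularly on each bipartite part by right multiplication, any $\sigma\in A_1(S)$ is determined by its values on a subset $B\subseteq R$ of size at most $\log_2(|R|/2)$, giving at most $|R|^{\log_2(|R|/2)}=2^{\log_2|R|\cdot\log_2(|R|/2)}$ candidate permutations. Multiplying the two estimates yields the stated bound for the low-fixity regime.

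The main technical obstacle is the \emph{high-fixity} case $x>n/2$, in which the orbit bound $2^{c(\sigma)}$ alone exceeds the target. The plan here is to exploit the abelian hypothesis on $M$: any $a\in R\setminus M$ induces an involutory automorphism of $M$ by conjugation, and the identities $\sigma(Sg)=S\sigma(g)$ for all $g\in R$ couple $\sigma|_M$ with $\sigma|_{R\setminus M}$ so tightly that a non-identity $\sigma$ of high fixity must agree on its moved part with one of a small, explicitly describable family of structured permutations (essentially parameterised by a single moved point together with the restriction of $\sigma$ to one coset of a suitable centraliser in $M$). A finer count on this family -- combined with the trivial bound $x\le n-1$ for $\sigma\ne 1$ -- absorbs the high-fixity contribution into the error term, the absolute constant $5$ absorbing small-group boundary cases and the overcounting inherent in the union bound. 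The abelianness of $M$ and the bipartite structure are both essential at this last step: the former gives the involution on $M$ that propagates the constraint, the latter prevents $\sigma$ from being ``absorbed'' into a right-translation in $R$, which would render the base bound vacuous.
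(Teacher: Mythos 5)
Your proposal has a genuine gap, and it sits exactly where the real difficulty of this theorem lies. The union bound you set up requires summing, over all candidate non-identity permutations $\sigma$ of $R$ fixing the vertex $1$, the number $2^{c(\sigma)}$ of $\sigma$-invariant sets $S$. Your claim that ``any $\sigma\in A_1(S)$ is determined by its values on a subset $B\subseteq R$ of size at most $\log_2(|R|/2)$'' is unjustified and false in general: a vertex stabiliser in the automorphism group of a Cayley graph need not have a small base (take $S=R\setminus M$, where the graph is complete bipartite and $A_1(S)$ is essentially a product of symmetric groups). A bound of the form $|R|^{\log_2(|R|/2)}$ on the number of candidate $\sigma$ is only available when $\sigma$ is forced to be a group automorphism of $R$ (i.e.\ when $\sigma$ normalises $R$), or at least to normalise $M$, since then $\sigma$ is determined by $\sigma|_M$ together with the image of one element of $R\setminus M$. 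The paper's proof is organised precisely around this point: it splits the bad sets according to whether $\Cay(R,S)$ is disconnected, whether $\nor{\Aut(\Cay(R,S))}{R}>R$ (Lemma~\ref{l:1}, which is where the dominant term $2^{3|R|/8+\log_2|R|\cdot\log_2(|R|/2)}$ comes from and where your fixed-point computation $x\le n/2$ is actually valid, because there $\varphi$ is a genuine automorphism of $R$ with $|\cent R\varphi|\le|R|/2$), whether $\nor{\Aut(\Cay(R,S))}{M}>R$ (Lemma~\ref{l:4}), and whether the bipartition stabiliser acts faithfully on $M$ (Lemma~\ref{l:5}).

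The second, related gap is the residual case in which $\sigma$ normalises neither $R$ nor $M$ and the bipartition stabiliser is faithful on $M$. This is your ``high-fixity'' regime, and your sketch (``a small, explicitly describable family of structured permutations'') is an assertion, not an argument; nothing in the abelianness of $M$ alone produces such a family. The paper handles this case by choosing $G^+$ with $M$ maximal and self-normalising in $G^+$ and invoking the structural theorem of Dobson, Spiga and Verret (\cite[Theorem~3.2]{DSV}) on permutation groups with a self-normalising maximal abelian regular subgroup: its conclusions (the core $N$ of $M$ is central, $(G^+)_\delta\times Q$ is the unique Sylow $p$-subgroup, the product $(G^+)_\delta(G^+)_{\delta^s}$ is independent of $s$) are what force every $(G^+)_1$-orbit on $R\setminus M$ either to be a coset of a nontrivial subgroup $H\le M$ outside a single coset $dK$ of a maximal subgroup, or to be permuted semiregularly by a nontrivial subgroup of $N$ on each $N$-coset. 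These are the combinatorial facts that cap the orbit count at roughly $3|R|/8$ in the remaining cases, and they cannot be recovered from the bipartite structure and the abelianness of $M$ by elementary means. Without an ingredient of this strength, your union bound does not close.
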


Since $R\setminus M$ has $2^{|R\setminus M|}=2^{\frac{|R|}{2}}$ subsets, from Theorem \ref{thrm:1} we immediately obtain the following corollary.

\begin{corollary}\label{cor:1}For every positive real number $\varepsilon>0$, there exists a natural number $n_\varepsilon$ such that, for every finite group $R$ of order at least $n_\varepsilon$ and for every abelian subgroup $M$ of $R$ having index $2$, we have
$$\frac{|\{S\mid S\subseteq R\setminus M, \Cay(R,S) \,\mathrm{is\,a\, DRR}\}|}{|\{S\mid S\subseteq R\setminus M\}|}\ge 1-\varepsilon.$$
\end{corollary}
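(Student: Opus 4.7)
The plan is to derive the corollary as a direct quantitative consequence of Theorem~\ref{thrm:1} by a one-line asymptotic comparison. Write $n=|R|$. The denominator in the stated ratio is exactly $2^{n/2}$, while Theorem~\ref{thrm:1} gives the numerator a lower bound of $2^{n/2}-5\cdot 2^{3n/8+\log_2 n\cdot\log_2(n/2)}$. Dividing these two expressions, it suffices to show
\[
1-5\cdot 2^{\,-\,n/8\,+\,\log_2 n\,\cdot\,\log_2(n/2)}\;\ge\;1-\varepsilon
\]
for all sufficiently large $n$, where the exponent is obtained by subtracting $n/2$ from $3n/8+\log_2 n\cdot\log_2(n/2)$.

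First I would observe that the exponent $f(n):=-\,n/8+\log_2 n\cdot\log_2(n/2)$ tends to $-\infty$ as $n\to\infty$, since the linear term $-n/8$ dominates the polylogarithmic term $\log_2 n\cdot\log_2(n/2)=(\log_2 n)^2-\log_2 n$. Concretely, for any fixed $\varepsilon>0$, the inequality $5\cdot 2^{f(n)}\le\varepsilon$ is equivalent to $f(n)\le\log_2(\varepsilon/5)$, which by elementary calculus holds for all $n$ beyond some explicit threshold $n_\varepsilon$ depending only on $\varepsilon$ (for instance, any $n$ with $n\ge 16(\log_2 n)^2+8\log_2(5/\varepsilon)$ suffices, and such $n_\varepsilon$ certainly exists).

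Choosing this $n_\varepsilon$, every finite group $R$ with $|R|\ge n_\varepsilon$ and every abelian subgroup $M$ of index $2$ satisfies the required bound. There is essentially no obstacle here: the entire content is the asymptotic statement $n/8\gg(\log_2 n)^2$, and the argument is a one-paragraph deduction. The only mild care needed is to state the bound on $n_\varepsilon$ in a self-contained way, so I would present the argument by taking logarithms in the inequality $5\cdot 2^{f(n)}\le\varepsilon$ and reading off the threshold directly.
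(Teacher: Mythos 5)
Your argument is correct and is exactly the paper's route: the paper derives Corollary~\ref{cor:1} immediately from Theorem~\ref{thrm:1} by noting that the denominator is $2^{|R|/2}$ and that the error term $5\cdot 2^{-|R|/8+\log_2|R|\cdot\log_2(|R|/2)}$ tends to $0$ since the linear term dominates the polylogarithmic one. Nothing further is needed.
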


\begin{corollary}\label{cor:2}Let $R$ be a finite group  with an abelian subgroup $M$ having index $2$. Then, either there exists a subset $S$ of $R\setminus M$ such that $\Cay(R,S)$ is a bipartite  $\mathrm{DRR}$  or $R$ is one of the $22$ groups in the second column of Table~$\ref{table:1}$ and $M$ is one of the groups in the third column in Table~$\ref{table:1}$ subject to being abelian.
\end{corollary}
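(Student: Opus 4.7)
The plan is to deduce Corollary~\ref{cor:2} from Theorem~\ref{thrm:1} by a two-step argument: an asymptotic part that disposes of all sufficiently large groups using the explicit lower bound, and a finite computer check that enumerates the sporadic exceptions listed in Table~\ref{table:1}. The key observation is that, if the lower bound in Theorem~\ref{thrm:1} is strictly positive, then there exists at least one subset $S\subseteq R\setminus M$ with $\Cay(R,S)$ a bipartite DRR, and the corollary follows for that pair $(R,M)$.

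For the asymptotic part, I would first determine explicitly for which orders the bound $2^{|R|/2}-5\cdot 2^{3|R|/8+\log_2|R|\cdot \log_2(|R|/2)}$ is strictly positive. Setting $n=|R|$ and taking logarithms, the bound is positive if and only if
\[
\frac{n}{8} \;>\; \log_2 5 \;+\; \log_2(n)\cdot\log_2(n/2).
\]
The right-hand side grows like $(\log_2 n)^2$ while the left-hand side grows linearly, so the inequality holds for all $n$ larger than some explicit threshold $n_0$; a direct numerical check shows that one may take $n_0$ of order a few hundred. In particular, for every finite group $R$ with $|R|\ge n_0$ and every abelian subgroup $M$ of index $2$, there exists a subset $S\subseteq R\setminus M$ with $\Cay(R,S)$ a bipartite DRR.

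The remaining task is to handle the finitely many groups $R$ of order less than $n_0$ that contain an abelian subgroup $M$ of index $2$. Here I would run through the \texttt{SmallGroups} library in \textsf{GAP}: for each such pair $(R,M)$, enumerate the inverse-free subsets $S\subseteq R\setminus M$ (using the action of $\Aut(R)$ fixing $M$ setwise to cut down the search), build $\Cay(R,S)$ and test whether $\Aut(\Cay(R,S))=R$. The 22 groups in the second column of Table~\ref{table:1} are precisely those for which no such $S$ exists for any choice of abelian $M$ of index $2$. The main obstacle is computational rather than conceptual: the threshold $n_0$ produced by the crude logarithmic estimate is large enough that a brute-force search is infeasible, so one must exploit the automorphism action and the fact that, for a bipartite DRR test, it suffices to check stabilisers of $R$ in $\Aut(\Cay(R,S))$ rather than the full automorphism group. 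Once this computation is organised and cross-checked against the output of the argument for Theorem~\ref{thrm:1}, the list in Table~\ref{table:1} is complete and the corollary is proved.
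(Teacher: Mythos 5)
Your proposal is essentially the paper's own proof: one checks that the lower bound of Theorem~\ref{thrm:1} is strictly positive exactly when $|R|\ge 640$, and the finitely many smaller pairs $(R,M)$ are settled by the computer search of Remark~\ref{remark1} (random sampling of $10\,000$ subsets per pair, with exhaustive checking only for the $22$ groups that fail, so the feasibility concern you raise does not actually arise). One small correction: for a DRR one tests \emph{arbitrary} subsets $S\subseteq R\setminus M$, not ``inverse-free'' ones, since no inverse condition is imposed on connection sets of Cayley digraphs.
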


Exactly as for the bipartite GRR pairs, based on the evidence provided by Corollary~\ref{cor:2} and on some extensive computer computations (see again Remark~\ref{remark1} for some details on these computations) we propose the following conjecture:
\begin{conjecture}\label{conj:2}{\rm
Let $R$ be a finite group with a subgroup $M$ having index $2$. Then one of the following holds:
\begin{enumerate}
\item $(R,M)$ admits a bipartite DRR, or
\item $R$ is one of the $22$ groups listed in Table~\ref{table:1}.
\end{enumerate}}
\end{conjecture}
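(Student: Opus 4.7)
The plan is to reduce to the case where $M$ is non-abelian and then to prove an analogue of Theorem~\ref{thrm:1} in that regime. Corollary~\ref{cor:2} already verifies the conjecture whenever $M$ is abelian: for such pairs, either $(R,M)$ admits a bipartite $\mathrm{DRR}$ or $R$ appears in Table~\ref{table:1}. The conjecture therefore claims that when $M$ is non-abelian there are no exceptions whatsoever, and it is this case that needs a fresh argument.

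My strategy is to count, as in Theorem~\ref{thrm:1}, the subsets $S\subseteq R\setminus M$ for which $\Cay(R,S)$ fails to be a bipartite $\mathrm{DRR}$, showing that this count is strictly smaller than $2^{|R|/2}$ once $|R|$ exceeds some explicit threshold, and then handling the remaining small orders by direct computer search. A subset $S$ can fail in two distinct ways. Firstly, some non-identity $\varphi\in\Aut(R)$ stabilizing $R\setminus M$ may satisfy $S^\varphi=S$; if $\varphi$ fixes $f$ elements of $R\setminus M$ pointwise and partitions the remaining $|R|/2-f$ elements into $\langle\varphi\rangle$-orbits of size at least two, the number of such $S$ is at most $2^{|R|/4+f/2}$. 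Summing over non-trivial $\varphi$, using a bound of the shape $|\Aut(R)|\le |R|^{\log_2|R|}$ together with the subgroup bound $f\le |R|/4$ (away from a short list of exceptions in small groups), produces an error term of the same flavour as the one appearing in Theorem~\ref{thrm:1}.

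The genuinely new difficulty is the second source of failure: an automorphism of $\Cay(R,S)$ that fixes the identity but is not induced by any element of $\Aut(R)$. In the classical $\mathrm{DRR}$ and $\mathrm{GRR}$ theory one controls such automorphisms through a local analysis of the subgraph induced on the neighbourhood of the identity; in a bipartite Cayley graph this neighbourhood is an independent set, so no information survives. I would therefore replace the local argument by one on the second neighbourhood: for each $m\in M$ the set of common neighbours of $1$ and $m$ in $\Cay(R,S)$ equals $\{s\in S : sm\in S\}$, and for a random $S$ the sizes and mutual intersections of these sets should be highly non-isomorphic across the different $m\in M$. A careful probabilistic analysis should then show that, for most $S$, these distance-$2$ patterns determine every element of $M$ up to a permutation induced by $\Aut(R)$, and hence that any identity-fixing graph automorphism of $\Cay(R,S)$ is forced to lie in $\Aut(R)$. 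The non-abelianness of $M$ is expected to be decisive here, because the multiplication in $M$ supplies precisely the asymmetry that had to be engineered by hand in the abelian case of Theorem~\ref{thrm:1}. Marshalling these estimates into a single lower bound of the form $2^{|R|/2}-o(2^{|R|/2})$, and then closing the argument with a finite computer check for small orders, is the main obstacle and the technical heart of any proof of the conjecture.
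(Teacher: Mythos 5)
The statement you are addressing is a conjecture: the paper does not prove it, and offers as evidence only the abelian case (Corollary~\ref{cor:2}, resting on Theorem~\ref{thrm:1}) plus an exhaustive-then-randomised computer search for $|R|<1\,024$ described in Remark~\ref{remark1}. Your reduction to non-abelian $M$ via Corollary~\ref{cor:2} is correct, and your first counting step (subsets of $R\setminus M$ fixed by a non-identity automorphism of $R$ preserving the bipartition) is sound and essentially reproduces Lemma~\ref{l:1}, whose argument never uses abelianness of $M$. But the proposal is a research programme rather than a proof, and the gap sits exactly where you locate it. In the paper's proof of Theorem~\ref{thrm:1}, the subsets $S$ admitting an identity-fixing graph automorphism that does \emph{not} normalise $R$ are controlled not by any neighbourhood analysis but by the structure theorem of Dobson, Spiga and Verret (quoted as Theorem~\ref{thm:main}), which describes permutation groups containing a self-normalising maximal \emph{abelian} regular subgroup; that theorem drives the bounds on $\mathcal{S}_5'$ and $\mathcal{S}_5''$ and has no known analogue when $M$ is non-abelian. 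Your proposed substitute --- arguing that for a random $S$ the common-neighbourhood sets $\{s\in S : sm\in S\}$ determine each $m\in M$ up to automorphisms of $R$ --- is the natural candidate, but nothing in the sketch quantifies the failure probability, explains how non-commutativity of $M$ actually enters the estimate, or deals with imprimitive automorphisms (the block obstruction that Lemma~\ref{l:5} handles in the abelian case). ``A careful probabilistic analysis should then show'' is precisely the open problem, not a step of a proof.

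A secondary issue: even granting an eventual bound of the form $2^{|R|/2}-o\bigl(2^{|R|/2}\bigr)$, your plan needs an explicit threshold beyond which the estimate wins, followed by an exhaustive verification of all smaller pairs $(R,M)$. The paper's computations stop at order $1\,024$ and are exhaustive only for the $22$ candidate exceptions, so the finite check you invoke is not yet in place for whatever threshold such estimates would yield. In short, the skeleton is reasonable and consistent with how Theorem~\ref{thrm:1} is proved, but the decisive lemma for non-abelian $M$ is missing and the conjecture remains open.
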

\begin{table}[!ht]
\begin{tabular}{|c|c|c|}\hline
Order& Group $R$& Subgroup 
$M$\\\hline
4& $C_2\times C_2$&any subgroup 
\\
6& $D_3=\langle x,y\mid x^3=y^2=1,x^y=x^{-1}\rangle$&$\langle y\rangle$\\
8& $D_4=\langle x,y\mid x^4=y^2=1,x^y=x^{-1}\rangle$& $\langle x\rangle$\\
& $Q_8$&any subgroup\\
&$C_4\times C_2=\langle x\rangle\times\langle y\rangle$&$\langle x^2,y\rangle$\\
&$C_2\times C_2\times C_2$&any subgroup\\
10& $D_5=\langle x,y\mid x^5=y^2=1,x^y=x^{-1}\rangle$&$\langle x\rangle$\\
12& $\langle x,y\mid x^6=y^4=1, x^3=y^2, y^{-1}xy=x^{-1}\rangle$&$\langle x\rangle$\\
&$D_6=\langle x,y\mid x^6=y^2=1,x^y=x^{-1}\rangle$&$\langle x\rangle$\\
14& $D_7=\langle x,y\mid x^7=y^2=1,x^y=x^{-1}\rangle$&$\langle x\rangle$\\
16& $C_4\times C_2\times C_2$&any subgroup\\
& $D_4\times C_2=\langle x,y\rangle\times \langle z\rangle$&subgroups containing $\langle x^2,z\rangle$\\
&$Q_8\times C_2$&any subgroup\\
&$\langle x,y,z\mid x^2=y^4=z^4=1,y^x=y^{-1},z^x=z^{-1},y^2=z^2,y^z=y^{-1}\rangle$ &$Q_8\cong \langle y,z\rangle$\\
&$C_2\times C_2\times C_2\times C_2$&any subgroup\\
18& $\langle x,y,z\mid x^2=y^3=z^3=1,yz=zy,y^x=y^{-1},z^x=z^{-1}\rangle$&$\langle y,z\rangle$\\
& $\langle x,y\mid x^6=y^3=1,xy=yx\rangle$&$\langle x^2,y\rangle$\\
32&$C_4\times C_2\times C_2\times C_2=\langle x\rangle\times\langle y\rangle\times\langle z\rangle\times \langle t\rangle$&$\langle x^2,y,z,t\rangle$\\
&$D_4\times C_2\times C_2=\langle x,y\rangle\times \langle z\rangle\times\langle t\rangle$& $\langle x,z,t\rangle$\\
&$Q_8\times C_2\times C_2=\langle x,y\rangle\times \langle z\rangle\times\langle t\rangle$& subgroups containing $\langle x^2,z,t\rangle$\\
&$C_2\times C_2\times C_2\times C_2\times C_2$&any subgroup\\
64&$C_2\times C_2\times C_2\times C_2\times C_2\times C_2$&any subgroup\\\hline
\end{tabular}
\caption{Small groups not admitting a bipartite DRR\label{table:1}}
\end{table}
We conclude this introductory section observing that in our companion paper~\cite{DFS} we have studied the asymptotic enumeration of bipartite graphs over abelian groups $A$. When $A$ has exponent greater than $2$, $A$ cannot admit a bipartite GRR in view of Theorem~\ref{thrm:11}. The work in~\cite{DFS} shows that, when $A$ has exponent greater than $2$, most bipartite graphs over $A$ have Cayley index $2$.

\subsection{General comments}

\begin{notation}\label{notnot:1}{\rm Our notation is standard. Given a group $G$, we denote by $\Zent G$ the center of $G$ and by $\gamma_2(G)$ the commutator subgroup of $G$. Given $g\in G$, we write $o(g)$ for the order of the element $g$. Given an automorphism $\varphi$ of $G$, we write
\begin{align*}
\cent G\varphi&:=\{g\in G\mid g^\varphi=g\},\\
\centm G\varphi&:=\{g\in G\mid g^{\varphi}=g^{-1}\}.
\end{align*}

We say that a group $D$ is a generalised dihedral group on $A$, if $A$ is an abelian
subgroup of index $2$ in $D$ and there exists an involution $\iota \in D \setminus A$ with $a^\iota = a^{-1}$, for
every $a \in A$. Note that, in this case, $a^x = a^{-1}$, for every $a \in A$ and every $x \in D \setminus A$.}
\end{notation}
\begin{remark}\label{remark1}{\rm There is a fair amount of computer computations involved in this paper. These computations are rather time consuming but entirely naive. For every group $R$ with $|R|< 1\, 024$, we have determined the subgroups $M$ of $R$ having index $2$. Then, for each pair $(R,M)$, we have tried to construct a bipartite DRR for $R$ with bipartition $\{M,R\setminus M\}$. Our approach for doing this is rather naive: for each pair $(R,M)$, we have randomly selected $10\,000$ subsets $S$ of $R\setminus M$ and we have checked whether the Cayley digraph $\Cay(R,S)$ was indeed a DRR. (For most pairs, $10$ iterations were sufficient to witness a subset $S\subseteq R\setminus M$ with $\Cay(R,S)$ a DRR.) There were only $22$ groups $R$  with $R$ having  order less then $1\, 024$ that did not pass this test and the largest of these groups $R$ is the elementary abelian $2$-group of order $64$. For these $22$ exceptional groups, we have checked exhaustively all the subsets $S$ of $R\setminus M$ and we confirmed in each case that there is no bipartite DRR for $R$ with bipartition $\{M,R\setminus M\}$. These $22$ groups (together with the subgroups $M$) are in Table~\ref{table:1}.

Table~\ref{table:2} was determined in  a similar manner and the only difference is that we used Theorem~\ref{thrm:1} in our algorithm.
 For every group $R$ with $|R|< 512$(there should be 640 consistent with in the ``Proof of Corollary 1.5"), we have determined the subgroups $M$ of $R$ having index $2$. Then, we have discarded the pairs $(R,M)$ satisfying parts~\eqref{thrmeq:1},~\eqref{thrmeq:2} or~\eqref{thrmeq:3} in Theorem~\ref{thrm:1}, because there exists no bipartite GRR over $R$ with bipartition $\{M,R\setminus M\}$. In light of Theorem~\ref{thrm:1} this task is pretty straighforward and fast to perform. For the remaining pairs, we have tried to construct a bipartite GRR for $R$ with bipartition $\{M,R\setminus M\}$. Our approach for doing this is as above:  we have randomly selected $10\,000$ inverse-closed subsets $S$ of $R\setminus M$ and we have checked whether the Cayley graph $\Cay(R,S)$ was indeed a GRR. (For most pairs, $500$ iterations were sufficient to witness an inverse-closed subset $S\subseteq R\setminus M$ with $\Cay(R,S)$ a GRR.) The groups $R$ that did not pass this test (for some subgroup $M$ having index $2$ and with $(R,M)$ not satisfying parts~\eqref{thrmeq:1},~\eqref{thrmeq:2} or~\eqref{thrmeq:3} in Theorem~\ref{thrm:1}) are reported in Table~\ref{table:2}. We are not reporting the subgroups $M$ in Table~\ref{table:2} because, for some groups $R$, there are many choices for $M$, which would make difficult to compile the table in a ready to use way. Finally, for these exceptional groups, we have checked exhaustively all the inverse-closed subsets $S$ of $R\setminus M$ and we confirmed in each case that there is no bipartite GRR for $R$ with bipartition $\{M,R\setminus M\}$.
}
\end{remark}

In what follows we use repeatedly the following facts.
\begin{facts}{\rm
\begin{enumerate}
\item\label{fact1} Let $X$ be a finite group. Since a chain of subgroups of $X$ has length at most $\lfloor\log_2|X|\rfloor$, $X$ has a generating set of cardinality at most $\lfloor \log_2|X|\rfloor\le \log_2|X|$.

\item\label{fact2} Any automorphism of $X$ is uniquely determined by the images of the elements of a generating set for $X$. Therefore $|\Aut(X)|\le |X|^{\lfloor \log_2|X|\rfloor}\le 2^{(\log_2|X|)^2}$.

\item\label{fact3} Any subgroup $Y$ of $X$ is determined by a generating set, which has cardinality at most $\lfloor \log_2|Y|\rfloor\le \lfloor \log_2|X|\rfloor$. Therefore $X$ has at most
$|X|^{\lfloor\log_2|Y|\rfloor}\le 2^{\log_2|Y|\log_2|X|}$ subgroups of cardinality $|Y|$ and $X$ has at most $2^{(\log_2|X|)^2}$ subgroups.
\end{enumerate}}
\end{facts}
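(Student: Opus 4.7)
The plan is to prove the three enumerated facts in turn by a single greedy construction: in any finite group, a proper inclusion of subgroups at least doubles the order, so chains of proper subgroups have length at most $\lfloor\log_2|X|\rfloor$.

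For Fact~\eqref{fact1}, I would build a generating set of $X$ inductively. Set $X_0:=\{1\}$; while $X_i\ne X$, pick any $g_{i+1}\in X\setminus X_i$ and let $X_{i+1}:=\langle X_i,g_{i+1}\rangle$. Because $X_i\lneq X_{i+1}$, Lagrange's theorem gives $|X_{i+1}|\ge 2|X_i|$, so by induction $|X_i|\ge 2^i$. Consequently the procedure terminates with $X_d=X$ after at most $d\le\lfloor\log_2|X|\rfloor$ steps, and $\{g_1,\dots,g_d\}$ is the required generating set.

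For Fact~\eqref{fact2}, I fix such a generating set $g_1,\dots,g_d$ with $d\le\lfloor\log_2|X|\rfloor$. Since every element of $X$ is a word in the $g_i$ and an automorphism $\varphi$ is a homomorphism, $\varphi$ is completely determined by the tuple $(\varphi(g_1),\dots,\varphi(g_d))\in X^d$. Hence $|\Aut(X)|\le |X|^d\le |X|^{\lfloor\log_2|X|\rfloor}$, and the identity $|X|^{\log_2|X|}=2^{(\log_2|X|)^2}$ yields the stated bound.

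For Fact~\eqref{fact3}, I apply Fact~\eqref{fact1} inside $Y$: the subgroup $Y$ has a generating set of size $e\le\lfloor\log_2|Y|\rfloor\le\lfloor\log_2|X|\rfloor$, and such a generating set is a tuple in $X^e$. Therefore the number of subgroups of $X$ of order exactly $|Y|$ is at most $|X|^e\le |X|^{\lfloor\log_2|Y|\rfloor}=2^{\log_2|Y|\cdot\log_2|X|}$. To bound the total number of subgroups, I pad each generating tuple with the identity so every subgroup of $X$ arises from some tuple in $X^{\lfloor\log_2|X|\rfloor}$, giving the bound $|X|^{\lfloor\log_2|X|\rfloor}=2^{(\log_2|X|)^2}$. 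There is no genuine obstacle here; the whole argument rests on the single order-doubling observation, and the only mild care needed is the padding trick in Fact~\eqref{fact3} to avoid summing over the divisors of $|X|$.
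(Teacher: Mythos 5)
Your proposal is correct and follows exactly the argument the paper sketches inside the statement itself: the order-doubling bound on subgroup chains gives a generating set of size at most $\lfloor\log_2|X|\rfloor$, and automorphisms and subgroups are then counted via tuples of generator images. The only detail you add beyond the paper's one-line justifications is the padding of generating tuples with the identity, which is a standard and correct way to avoid summing over tuple lengths.
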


\section{
route to the proof of Theorem~\ref{thrm:1} and Corollaries~\ref{cor:1} and~\ref{cor:2}}\label{part:1}
\begin{lemma}\label{l:3}
Let $R$ be a group and let $M$ be a subgroup of $R$ having index $2$. The number of subsets $S$ of $R\setminus M$ with $\langle S\rangle $ a proper subgroup of $R$ is at most $2^{\frac{|R|}{4}+(\log_2|R|)^2}$; moreover, when $R$ is solvable, this upper bound can be improved to  $2^{\frac{|R|}{4}+\log_2|R|}$.
\end{lemma}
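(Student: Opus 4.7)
The plan is to apply a union bound over the maximal subgroups of $R$. If $S\subseteq R\setminus M$ satisfies $\langle S\rangle<R$, then $\langle S\rangle$ is contained in some maximal subgroup $H$ of $R$, and therefore $S\subseteq H\cap(R\setminus M)=H\setminus M$. Hence
$$|\{S\subseteq R\setminus M\mid \langle S\rangle<R\}|\le \sum_{H}2^{|H\setminus M|},$$
where the sum runs over all maximal subgroups $H$ of $R$, and it remains to bound each term and the number of terms.

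For each term, observe that $|H\setminus M|=0$ when $H=M$, contributing $1$ to the sum. When $H\ne M$, I would argue $H\not\subseteq M$: otherwise, since $M$ itself is maximal in $R$ (it has index $2$), we would obtain $H\subsetneq M\subsetneq R$, contradicting the maximality of $H$. Hence $HM=R$, and so $|H\cap M|=|H||M|/|R|=|H|/2$, which gives $|H\setminus M|=|H|/2\le |R|/4$ (the last inequality using that $H$ is proper). In particular, every summand is at most $2^{|R|/4}$, so the total is at most $N(R)\cdot 2^{|R|/4}$, where $N(R)$ denotes the number of maximal subgroups of $R$. For the general statement, the subgroup-counting estimate in the third of the Facts above yields $N(R)\le 2^{(\log_2|R|)^2}$, and hence the bound $2^{|R|/4+(\log_2|R|)^2}$.

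For the solvable improvement, I would invoke the classical theorem of G.\,E.\ Wall that a finite solvable group has strictly fewer than $|R|$ maximal subgroups; this replaces the crude estimate on $N(R)$ by $N(R)<|R|=2^{\log_2|R|}$ and immediately gives $N(R)\cdot 2^{|R|/4}<2^{|R|/4+\log_2|R|}$. The only nontrivial external input, and thus the main obstacle, is Wall's bound on the number of maximal subgroups of a solvable group; everything else reduces to elementary set counting together with the observation that $M$ itself is maximal in $R$.
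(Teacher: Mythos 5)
Your proposal is correct and follows essentially the same route as the paper: a union bound over the maximal subgroups of $R$, the observation that each maximal subgroup meets $R\setminus M$ in at most $|R|/4$ elements, and then Fact (3) in general and Wall's theorem in the solvable case to count the maximal subgroups. Your treatment of the individual terms is in fact slightly more careful than the paper's (which nominally indexes the union by maximal subgroups of $M$ rather than of $R$, an evident slip), but the argument is the same.
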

\begin{proof}
Set $N:=|\{
S\subseteq R\setminus  M\mid
\langle S\rangle < R\}|$.
Clearly,
\begin{align*}
\{S\subseteq R\setminus M\mid \langle S\rangle<R\}&=\bigcup_{\substack{C<M\\C\textrm{ maximal in }M}}\{S\subseteq R\setminus M\mid \langle S\rangle\le C \}.
\end{align*}
Since $\{S\subseteq R\setminus M\mid \langle S\rangle \le C\}=\{S\mid S\subseteq C\setminus (C\cap  M)\}$, we have
$$|\{S\subseteq R\setminus M\mid \langle S\rangle \le C\}|=2^{|C|-|C\cap M|}\le 2^{\frac{|C|}{2}}\le 2^{\frac{|R|/2}{2}}=2^{\frac{|R|}{4}}.$$
Therefore
\begin{align*}
N&
\le |\{C\le G\mid C \textrm{ is a maximal subgroup of }G\}|\cdot 2^{\frac{|R|}{4}}.
\end{align*}
Tim Wall in 1961~\cite{Wall} has proved that the number of maximal subgroups of a finite solvable  group $R$ is less than the group order $|R|=2^{\log_2|R|}$. In particular, our proof is completed in the case of solvable groups. Whereas the general bound $2^{|R|/4+(\log_2|R|)^2}$ follows from Fact~\eqref{fact3}.
\end{proof}

Liebeck, Pyber and Shalev have proven~\cite[Theorem~1.3]{LPSLPS} a polynomial version of Wall's theorem for arbitrary finite groups: there exists an absolute constant $c$ such that, every finite group $R$ has at most $c|R|^{3/2}=2^{\log_2(c|R|^{3/2}) }$ maximal subgroups. Hence, if one minds so, the general bound $2^{\frac{|R|}{4}+(\log_2|R|)^2}$ can be improved to $2^{\frac{|R|}{4}+\log_2(c|R|^3/2)}$, for some absolute constant $c$.

\begin{lemma}\label{l:1}Let $R$ be a finite group with an abelian subgroup $M$ having index $2$ and let $\varphi$ be a non-identity automorphism of $R$ with $M^\varphi=M$. The number of subsets $S$ of $R\setminus M$ with $S^\varphi=S$ is at most $2^{\frac{3|R|}{8}}$.
\end{lemma}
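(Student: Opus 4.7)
The plan is to reduce the counting to the orbit structure of $\varphi$ on $R\setminus M$. Since $\varphi$ preserves $M$, it also permutes $R\setminus M$, and the $\varphi$-invariant subsets $S\subseteq R\setminus M$ are exactly the unions of $\varphi$-orbits on $R\setminus M$. Hence if $k$ denotes the number of these orbits, the quantity I want to bound equals $2^k$, and it suffices to prove $k\le \tfrac{3|R|}{8}$.

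Split the orbits on $R\setminus M$ into fixed points (orbits of size $1$) and non-trivial orbits (size $\ge 2$). If $f:=|\cent R\varphi\cap (R\setminus M)|$ is the number of fixed points and $m$ the number of non-trivial orbits, then $f+2m\le |R\setminus M|=|R|/2$, so
\[
k=f+m\le f+\frac{|R|/2-f}{2}=\frac{f}{2}+\frac{|R|}{4}.
\]
Thus the whole lemma reduces to the inequality $f\le |R|/4$, which is the key step.

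To establish $f\le |R|/4$, set $M_0:=\cent M\varphi$, a subgroup of $M$. If $\varphi$ has no fixed points in $R\setminus M$ then $f=0$ and we are done. Otherwise pick $a\in R\setminus M$ with $a^\varphi=a$; for any $m\in M$ one has $(am)^\varphi=a\,m^\varphi$, so $am$ is fixed if and only if $m\in M_0$. Consequently the fixed points of $\varphi$ in $R\setminus M$ form the single coset $aM_0$, and $f=|M_0|$. The main obstacle is ruling out $M_0=M$: if this happened, then $\varphi$ would fix $a$ and every element of $M$, hence every element of $R=M\cup aM$, contradicting $\varphi\ne 1$. Therefore $M_0$ is a proper subgroup of $M$, giving $f=|M_0|\le |M|/2=|R|/4$.

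Plugging this back into the earlier inequality yields $k\le \tfrac{|R|}{8}+\tfrac{|R|}{4}=\tfrac{3|R|}{8}$, so the number of $\varphi$-invariant subsets of $R\setminus M$ is at most $2^{3|R|/8}$. I note that the hypothesis ``$M$ abelian'' plays no role in this argument; it is presumably present because the lemma will be used in the abelian setting when assembled with Lemma~\ref{l:3} to prove Theorem~\ref{thrm:1}.
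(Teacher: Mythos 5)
Your proof is correct and follows essentially the same route as the paper: reduce to counting $\varphi$-orbits on $R\setminus M$, bound the number of fixed points there by $|R|/4$ (you via $\cent M\varphi$ being proper in $M$, the paper via $\cent R\varphi$ being proper in $R$ and meeting $R\setminus M$ in at most half its elements — the same bound), and conclude $2^{k}\le 2^{3|R|/8}$. Your observation that the abelian hypothesis on $M$ is not needed is also accurate.
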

\begin{proof}
Since $M$ is $\varphi$-invariant, so is $R\setminus M$. Let $O_1,\ldots,O_\ell$ be the orbits of $\langle\varphi\rangle$ on $R\setminus M$. If $S\subseteq R\setminus M$ is $\varphi$-invariant, then $S$ is a union of some of $O_1,\ldots,O_\ell$ and hence
\begin{equation}\label{eq:1vai}
|\{S\subseteq R\setminus M\mid S^\varphi=S\}|=2^\ell.
\end{equation}

The orbits of $\langle\varphi\rangle$ on $R$ of cardinality one correspond exactly to the elements of $\cent R{\varphi}:=\{a\in R\mid a^\varphi=a\}$, whereas the orbits of $\langle\varphi\rangle$ on $R\setminus \cent R \varphi$ have cardinality at least $2$.  Now, observing that $|\cent R\varphi|\le |R|/2$ and that
\[
|\cent R\varphi\cap(R\setminus M)|=
\begin{cases}
0&\textrm{ when }\cent R\varphi\le M,\\
|\cent R \varphi \cap M|=|\cent R\varphi|/2&\textrm{ when }\cent R\varphi\nleq M,
\end{cases}
\] we get
\begin{align}\label{eq:2vai}
\ell&\le
|\cent R{\varphi}\cap (R\setminus M)|
+
\frac{
|(R\setminus M)\setminus (\cent R{\varphi}\cap (R\setminus M))|}{2}
=\frac{|\cent R{\varphi}\cap (R\setminus M)|}{2}+\frac{|R\setminus M|}{2}\\\nonumber
&= \frac{|\cent R{\varphi}\cap (R\setminus M)|}{2}+\frac{|R|}{4}\le \frac{|\cent R\varphi|}{4}+\frac{|R|}{4}\le \frac{|R|/2}{4}+\frac{|R|}{4}=\frac{3}{8}|R|.
\end{align}

The proof now follows from \eqref{eq:1vai} and \eqref{eq:2vai}.
\end{proof}

The next lemma is somehow more technical but relevant for the proof of Theorem~\ref{thrm:1}.
\begin{lemma}\label{l:4}Let $R$ be a finite group with an abelian subgroup $M$ having index $2$. Then the number of subsets $S\subseteq R\setminus M$ such that
\begin{itemize}
\item $\Cay(R,S)$ is connected, and
\item $R<\Aut(\Cay(R,S))$, and
\item  $\nor{\Aut(\Cay(R,S))}R=R$, and
\item $R<\nor{\Aut(\Cay(R,S))}M$
\end{itemize}
 is at most $2^{\frac{|R|}{3}+\log_2(|R|/2)+(\log_2(|R|/2))^2}$.
\end{lemma}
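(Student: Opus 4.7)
The plan is to work with the group $H := \nor{A}{M} \cap A_1$, the stabilizer of the vertex $1$ inside the normalizer of $M$ in $A := \Aut(\Cay(R,S))$. The first key step is to show $|H| \ge 3$. Since $R$ is regular on the vertex set, $|H| = |\nor{A}{M}|/|R|$. The hypothesis $R < \nor{A}{M}$ rules out $|H| = 1$; and if $|H| = 2$, then $R/M$ would be a subgroup of index $|H| = 2$ in $\nor{A}{M}/M$ and hence normal there, making $R$ normal in $\nor{A}{M}$. Combined with $\nor{A}{R} = R$ this would give $\nor{A}{M} \le \nor{A}{R} = R$, contradicting $R < \nor{A}{M}$. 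Hence $|H| \ge 3$.

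Each $h \in H$ fixes $1$, normalizes $M$, and so induces an automorphism $\tau_h \in \Aut(M)$ by conjugation. Fix $r_0 \in R\setminus M$ and set $m_{0,h} := r_0^{-1} h(r_0) \in M$; then a direct check using that $h$ normalizes $M$ gives the semi-linearity $h(rm) = h(r)\tau_h(m)$ for $r \in R$, $m \in M$, and in particular $h(r_0 m) = r_0 m_{0,h}\tau_h(m)$. Thus $h$ is determined by the pair $(\tau_h, m_{0,h}) \in \Aut(M) \times M$, and the assignment $h \mapsto (\tau_h, m_{0,h})$ embeds $H$ into the group $\tilde G := \Aut(M) \ltimes M$. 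Because $S$ coincides with the set of in-neighbors of $1$ in $\Cay(R,S)$ and $H \le A$ fixes $1$, the subset $S$ is $H$-invariant, hence a union of $H$-orbits on $R\setminus M$. For $h \in H\setminus\{1\}$ the set $\mathrm{Fix}(h) \cap (R\setminus M)$ is either empty (when $\tau_h = 1$, since $h \ne 1$ forces $m_{0,h} \ne 1$) or a coset of $\cent M{\tau_h}$ of cardinality at most $|M|/2 = |R|/4$ (when $\tau_h \ne 1$); in either case $|\mathrm{Fix}(h) \cap (R\setminus M)| \le |R|/4$. The Cauchy--Frobenius orbit-counting lemma then yields
\[
\ell(H) \le \frac{1}{|H|}\!\left(\frac{|R|}{2} + (|H|-1)\cdot\frac{|R|}{4}\right) = \frac{|R|}{4} + \frac{|R|}{4|H|} \le \frac{|R|}{3},
\]
using $|H| \ge 3$, so at most $2^{|R|/3}$ subsets of $R\setminus M$ are $H$-invariant.

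It remains to enumerate the possibilities for $H$, for which $|\tilde G| \le |\Aut(M)| \cdot |M| \le 2^{(\log_2(|R|/2))^2 + \log_2(|R|/2)}$ by Fact~\eqref{fact2}. The main technical obstacle is that a naive count of subgroups of $\tilde G$ wastes too much, so one splits on the structure of $H$. When $H$ contains an element $\varphi$ of prime order $p \ge 3$, the non-fixed orbits of $\langle\varphi\rangle$ on $R\setminus M$ have size $p$, so with the bound $|\mathrm{Fix}(\varphi) \cap (R\setminus M)| \le |R|/4$ one obtains $\ell(\langle\varphi\rangle) \le |R|/(2p) + (1-1/p)\cdot|R|/4 \le |R|/3$, and summing over the at most $|\tilde G|$ candidates $\varphi \in \tilde G$ already gives the target bound. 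When instead $H$ is an elementary abelian $2$-group (so $|H| \ge 4$), any pair of independent involutions generates a Klein-four subgroup, and the same Cauchy--Frobenius computation sharpens the orbit count to $\ell \le 5|R|/16$; this improvement absorbs the extra combinatorial cost of specifying two generators in $\tilde G$. Combining the two cases produces the claimed bound $2^{|R|/3+\log_2(|R|/2)+(\log_2(|R|/2))^2}$.
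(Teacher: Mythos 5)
Your overall strategy---embed the point stabiliser $H:=\nor{\Aut(\Cay(R,S))}{M}\cap \Aut(\Cay(R,S))_1$ into $\Aut(M)\ltimes M$ via $h\mapsto(\tau_h,m_{0,h})$, bound fixed-point sets on $R\setminus M$ by $|R|/4$, and count invariant subsets with Cauchy--Frobenius---is sound as far as it goes, and your Case~1 (an element of odd prime order, summed over the at most $|M||\Aut(M)|$ candidates) is essentially the paper's count. But there are two genuine gaps in the remainder. First, your dichotomy is not exhaustive: the complement of ``$H$ contains an element of prime order $p\ge 3$'' is ``$H$ is a $2$-group,'' not ``$H$ is an elementary abelian $2$-group,'' so $2$-groups containing elements of order $4$ are never treated. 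Second, and more seriously, your Case~2 does not actually deliver the stated bound: specifying two independent involutions costs $|\Aut(M)\ltimes M|^2\le 2^{2(\log_2(|R|/2))^2+2\log_2(|R|/2)}$, while the improvement from $\ell\le |R|/3$ to $\ell\le 5|R|/16$ only buys a factor $2^{|R|/48}$. The inequality $(\log_2(|R|/2))^2+\log_2(|R|/2)\le |R|/48$ fails for all $|R|$ up to roughly $2^{13}$, so the lemma's exact bound (which is used downstream to pin down the computational threshold in Corollary~1.5) is not established for small and moderate $|R|$.

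The missing ingredient is a fuller use of the hypothesis $\nor{\Aut(\Cay(R,S))}{R}=R$, which you invoke only to get $|H|\ge 3$. The paper instead chooses a subgroup $G$ with $R<G\le \nor{\Aut(\Cay(R,S))}{M}$ and $R$ \emph{maximal} in $G$; picking $\varphi\in G_1\setminus\{1\}$, self-normalisation of $R$ gives $R^\varphi\ne R$, maximality gives $G=\langle R,R^\varphi\rangle$, and since $M\unlhd G$ the quotient $G/M$ is generated by the two order-two subgroups $R/M$ and $R^\varphi/M$, hence is dihedral; maximality of $R/M$ then forces $G/M\cong D_p$ with $p$ an odd prime (the case $p=2$ would make $R$ normal in $G$). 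Thus $G_1$ is cyclic of odd prime order, so an element of odd prime order in the stabiliser always exists and your Case~1 alone covers everything: your Case~2 and the missing order-$4$ case are vacuous, but this has to be proved rather than patched quantitatively. With that structural step inserted, your argument closes; without it, the proof as written does not establish the lemma.
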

\begin{proof}
Let $S$ be a subset of $R\setminus M$ and suppose that $R<\Aut(\Cay(R,S))$, $\nor {\Aut(\Cay(R,S))}R=R$ and $R<\nor {\Aut(\Cay(R,S))}M$. Choose $G\le \nor{\Aut(\Cay(R,S))}{M}$ with $R<G$ and with $R$ maximal in $G$: this is of course possible because $R<\nor{\Aut(\Cay(R,S))}{M}$. Observe that $\nor GM=G$ and $\nor GR=R$.

Fix $\varphi\in G_1\setminus R$. As $\nor G R=R$, we have $R^\varphi\ne R$. Since $R$ is maximal in $G$, we have $G=\langle R,R^\varphi\rangle$. Observe that in $G/M=\langle R/M,R^\varphi/M\rangle$, the groups $R/M$ and $R^\varphi/M$ have order $2$ and hence $G/M$ is a dihedral group. Since $R/M$ is maximal in $G/M$, we deduce that $G/M$ is a dihedral group of order $2p$, for some odd prime number $p$. Since $G=RG_1$, we deduce that $G_1=\langle \varphi\rangle$ is cyclic of prime order $p\ge 3$.

We now set some notation. The elements in $R$ can simultaneously represent the vertices of the digraph $\Cay(R,S)$ as well as the translation automorphisms. Given $d\in R$, we denote by $\varphi^{-1}d\varphi$ the automorphism of $\Cay(R,S)$ obtained by applying first $\varphi$, then the right translation by $d$ and then $\varphi$. Whereas, we denote by $d^\varphi$ the image of the vertex $d$ under the automorphism $\varphi$. This notation is consistent with the work of Godsil in this area, see~\cite{Godsil}.

Consider $a\in M$. Then $\varphi^{-1}a\varphi\in M$ because $\varphi$ normalises $M$. Since $\Cay(R,S)$ is connected, $\{M,R\setminus M\}$ is the only bipartion of $\Cay(R,S)$ and hence $a^\varphi\in M$. Moreover, $1^{\varphi^{-1} a\varphi}=(1^{\varphi^{-1}})^{a\varphi}=1^{a\varphi}=a^\varphi=1^{a^\varphi}$. Therefore, $\varphi^{-1}a\varphi$ and $a^\varphi$ are two elements of $M$ mapping the vertex $1$ to the same vertex. Since $M$ acts semiregularly, these two elements must be equal and hence $$\varphi^{-1}a\varphi=a^\varphi,\quad\forall a\in M.$$

Fix $d\in R\setminus M$. For every $a\in M$, we have
\begin{equation}\label{eq:silly}
(da)^\varphi=1^{da\varphi}=1^{d\varphi \varphi^{-1}a\varphi}=(1^{d\varphi})^{\varphi^{-1}a\varphi}=(1^{d\varphi})a^\varphi=d^\varphi a^\varphi.
\end{equation}
This means that the mapping $\varphi:R\to R$ is uniquely determined by the image of $d$ and by the restriction $\varphi_{|M}$ of $\varphi$ to $M$. Since we have $|R\setminus M|=|M|$ choices for the image of $d$ and since we have $|\Aut(M)|$ choices for $\varphi_{|M}$, we have at most $|M||\Aut(M)|\le 2^{\log_2|M|+(\log_2|M|)^2}$ choices for $\varphi$.

Let us now count the number of subsets $S\subseteq R\setminus M$ invariant by $\varphi$. Clearly, $S$ is a union of $\langle\varphi\rangle$-orbits. Suppose first that  $\varphi$ has no fixed points on $R\setminus M$. Since $\varphi$ has prime order $p\ge 3$, we obtain that each orbit of $\langle\varphi\rangle$ on $R\setminus M$ has cardinality at least $3$ and hence the number of choices for $S$ is at most
$$2^{\frac{|R\setminus M|}{3}}=2^{\frac{|R|}{6}}.$$
Suppose now that $\varphi$ fixes some point in $R\setminus M$. Without loss of generality, we may suppose that $d$ is fixed by $\varphi$ and hence $d^\varphi=d$. Now,~\eqref{eq:silly}  gives $(da)^\varphi=da^\varphi,$
which shows that $da$ is fixed by $\varphi$ if and only if $a\in \cent M \varphi$.  If $\varphi$ centralises $M$, then $\varphi$ fixes each vertex of $\Cay(R,S)$, contrary to our assumption that $\varphi\ne 1$. Thus $\cent M\varphi\ne M$ and hence $|M:\cent M\varphi|\ge 2$. Therefore the number of choices for $S$ is at most
$$2^{|\cent M \varphi|+\frac{|M\setminus \cent M\varphi|}{3}}=2^{\frac{|M|}{3}+\frac{2|\cent M\varphi|}{3}}\le \
2^{\frac{|M|}{3}+\frac{2(|M|/2)}{3}}
\le 2^{\frac{|M|}{3}+\frac{|M|}{3}}=2^{\frac{|R|}{3}}.$$
Now the proof follows.
\end{proof}

\begin{lemma}\label{l:5}Let $R$ be a finite group with an abelian subgroup $M$ having index $2$. The number of subsets $S\subseteq R\setminus M$ such that the stabiliser in $\Aut(\Cay(R,S))$ of the bipartition $\{M,R\setminus M\}$ does not act faithfully on $M$ is at most $2^{\frac{|R|}{4}+(\log_2(|R|/2))^2}$.
\end{lemma}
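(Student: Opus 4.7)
Let $A^* \le \Aut(\Cay(R,S))$ be the stabilizer of the bipartition $\{M,R\setminus M\}$, and let $K \trianglelefteq A^*$ be the kernel of its action on $M$. My plan is to show that any non-identity $\psi \in K$ acts so restrictively on $R\setminus M$ that its mere existence forces some non-identity element of $M$ to right-stabilize $S$, and then to finish via a union bound over such stabilizing elements.

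Concretely, I would fix $\psi \in K$ with $\psi \ne 1$. Since $\psi$ preserves the bipartition and fixes $M$ pointwise, for every $v \in R\setminus M$ the element $m_v := v^{-1}\psi(v)$ lies in $M$. The key structural step is to show $m_v \in R_S := \{m \in M \mid Sm=S\}$. For this I apply $\psi$ to an arc $(v,u)$ with $v \in R\setminus M$ and $u \in M$, i.e.\ $vu^{-1} \in S$; the image $(\psi(v),\psi(u)) = (vm_v, u)$ must also be an arc, which means $vm_v u^{-1} \in S$. Because $M$ is abelian, $m_v u^{-1} = u^{-1} m_v$, so $vm_v u^{-1} = (vu^{-1})m_v$. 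As $u$ ranges over $M$ the element $vu^{-1}$ ranges over all of $R\setminus M$, and the resulting biconditional is exactly $Sm_v = S$, that is, $m_v \in R_S$. Since $\psi \ne 1$, some $m_v$ is non-identity, and therefore $R_S \ne 1$.

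The remainder is a routine counting argument. The number of $S$ in question is bounded by
\[
\sum_{m \in M \setminus \{1\}} \bigl|\{S \subseteq R\setminus M : Sm = S\}\bigr|.
\]
For each such $m$, of order $d \ge 2$, the subgroup $\langle m\rangle$ acts freely on $R\setminus M$ by right multiplication with $|R|/(2d) \le |R|/4$ orbits of size $d$, and the subsets $S$ with $Sm = S$ are precisely the unions of these orbits, giving a count of $2^{|R|/(2d)} \le 2^{|R|/4}$. Multiplying by $|M|-1 < |R|/2$ produces an overall bound of $2^{|R|/4+\log_2(|R|/2)}$, which is at most the claimed $2^{|R|/4+(\log_2(|R|/2))^2}$ for $|R|\ge 4$ (the smaller cases being vacuous).

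The only step demanding real care is the structural claim $m_v \in R_S$; everything afterward is a direct union bound. The abelian hypothesis on $M$ is used precisely to collapse $u^{-1}m_v u$ to $m_v$ in the arc computation. Without it one would land in the equivalent condition $\beta(m_v) \in R_S$, where $\beta$ denotes the common conjugation action of $R\setminus M$ on $M$, and the final count would go through unchanged after a harmless substitution.
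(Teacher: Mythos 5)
Your proof is correct, and it takes a genuinely more elementary route than the paper's. The paper works with the full kernel $K_1\times K_2$ (the kernels of the actions on $M$ and on $R\setminus M$), observes that its orbits form a block system for the transitive group $R(K_1\times K_2)$, uses the regularity of $R$ to identify the blocks with the cosets of a nontrivial subgroup $L\le M$, shows that $S$ must be a union of $L$-cosets, and finally sums over the at most $|M|^{\log_2|M|}=2^{(\log_2(|R|/2))^2}$ subgroups of $M$ -- which is exactly where the $(\log_2(|R|/2))^2$ in the exponent comes from. You instead take a single non-identity element $\psi$ of the kernel of the action on $M$ and compute directly, from arc-preservation and the commutativity of $M$, that the displacement $m_v=v^{-1}\psi(v)$ satisfies $Sm_v=S$; your verification is sound (for fixed $v$ the biconditional $vu^{-1}\in S\Leftrightarrow (vu^{-1})m_v\in S$ does sweep out all of $R\setminus M$ as $u$ runs over $M$, and $\psi\ne 1$ forces some $m_v\ne 1$). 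Union-bounding over the $|M|-1$ non-identity elements of $M$ rather than over subgroups buys you the strictly sharper bound $2^{|R|/4+\log_2(|R|/2)}$, which of course implies the stated one; both arguments share the underlying theme that a nontrivial kernel forces $S$ to be a union of cosets of a nontrivial subgroup of $M$. One small caveat: your closing aside about dropping the abelian hypothesis is not right as stated, since for non-abelian $M$ one gets $vm_vu^{-1}=(vu^{-1})(um_vu^{-1})$ with the multiplier $um_vu^{-1}$ varying with $u$, so no single element of $M$ right-stabilises $S$; but this remark lies outside the lemma and does not affect your proof.
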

\begin{proof}
Let $S$ be a subset of $R\setminus M$ and suppose that the stabiliser in $\Aut(\Cay(R,S))$ of the bipartition $\{M,R\setminus M\}$ does not act faithfully on $M$.  For simplicity we write $G:=\Aut(\Cay(R,S))$ and we denote by $G^+$ the stabiliser of the bipartition $\{M,R\setminus M\}$ of $\Cay(R,S)$. Clearly, $|G:G^+|=2$.

Let $K_1$ be the kernel of the action of $G$ on $M$ and let $K_2$ be the kernel of the action of $G$ on $R\setminus M$. Set $K:=K_1\times K_2$. Clearly, $K\unlhd G$ and by hypothesis $K\ne 1$. Now define $H:=RK$ and observe that $H\ne R$ because $R$ acts regularly on the graph, but $K$ has non-identity permutations fixing some vertex.

Since $K\unlhd H$, the orbits of $K$ form a system of imprimitivity for the action of $H$. This system of imprimitivity is also a system of imprimitivity for $R$ because $R\le H$. Since $R$ acts regularly, this system of imprimitivity consists of the cosets of a certain subgroup $L$ of $R$. In particular, $d^K=dL$, for every $d\in R$. We have $L\le M$ because $K$ preserves the bipartition $\{M,R\setminus M\}$ of the graph. Moreover, $L\ne 1$ because $K\ne 1$.

We claim that $S$ is a union of $L$-cosets. To this end, it suffices to show that, for every $s\in S$, we have $sL\subseteq S$. Let $s\in S\subseteq R\setminus M$. Since the stabiliser $H_1$ of the vertex $1$  contains  $K_2$ and since $H$ acts as a group of automorphisms of $\Cay(R,S)$, we have $s^{K_2}\subseteq s^{H_1}\subseteq S$. Since the elements in $K_1$ fix $M$ pointwise, $s^{K_2}=s^{K_1\times K_2}=s^{K}=sL$.

From the previous paragraph, when the subgroup $L$ of $M$ is given, the number of choices for $S$ is at most
$$2^{\frac{|R\setminus M|}{|L|}}\le 2^{\frac{|R|}{4}},$$
because $|L|\ge 2$. The number of choices of $L$ is at most $|M|^{\log_2|M|}$ by Fact~\eqref{fact3}.
\end{proof}

\begin{proof}[Proof of Theorem~$\ref{thrm:1}$]
Let $R$ be a finite group with an abelian subgroup $M$ having index $2$ and let
\begin{equation*}
\mathcal{S}:=\{S\subseteq R\setminus M\mid \Cay(R,S) \textrm{ is not a DRR}\}.
\end{equation*}
We partition the set $\mathcal{S}$ in various subsets. First,
\begin{equation*}
\mathcal{S}_1:=\{S\in\mathcal{S}\mid \Cay(R,S) \textrm{ is not connected}\}.
\end{equation*}
By Lemma~\ref{l:3}, we have
\begin{equation}\label{eq1:1}
|\mathcal{S}_1|\le 2^{\frac{|R|}{4}+\log_2|R|}.
\end{equation}
Observe that, if $S\in\mathcal{S}\setminus \mathcal{S}_1$, then $\Cay(R,S)$ is connected  and hence $\{M,R\setminus M\}$ is the only bipartition of $\Cay(R,S)$. In particular, every automorphism of $\Cay(R,S)$ either fixes $M$ setwise, or maps $M$ to $R\setminus M$.

Then, we set
\begin{equation*}
\mathcal{S}_2:=\{S\in\mathcal{S}\setminus \mathcal{S}_1\mid \nor{\Aut(\Cay(R,S))}R>R\}.
\end{equation*}
If $S\in\mathcal{S}_2$, then there exists a non-identity automorphism $\varphi\in \nor{\Aut(\Cay(R,S))}R$ with $1^\varphi=1$. Clearly, $M^\varphi=M$ and $\varphi$ is an automorphism of $R$. By Lemma~\ref{l:1}, we have at most $2^{3|R|/8}$ choices for $S$ when $\varphi$ is fixed. Fix $d\in R\setminus M$. The automorphism $\varphi$ of $R$ is uniquely determined by the image of $d$ (which is an element of $R\setminus M$ because $M^\varphi=M$) and by the restriction $\varphi_{|M}$ of $\varphi$ to $M$. Therefore, we have at most $|M||\Aut(M)|$ choices for $\varphi$. By Fact~\eqref{fact2}, we have
$$|M||\Aut(M)|\le 2^{\log_2|M|+(\log_2|M|)^2}=2^{(\log_2|R|)(\log_2(|R|/2))}.$$
It follows that
\begin{equation}\label{eq1:2}
|\mathcal{S}_2|\le 2^{\frac{3|R|}{8}+(\log_2|R|)(\log_2(|R|/2))}.
\end{equation}
For every $S\in\mathcal{S}\setminus (\mathcal{S}_1\cup\mathcal{S}_2)$, $\Cay(R,S)$ is connected and $R$ is self-normalising in $\Aut(\Cay(R,S))$.

Then, we set
\begin{equation*}
\mathcal{S}_3:=\{S\in\mathcal{S}\setminus (\mathcal{S}_1\cup \mathcal{S}_2)\mid \nor{\Aut(\Cay(R,S))}M>R\}.
\end{equation*}
By Lemma~\ref{l:4}, we have
\begin{equation}\label{eq1:3}
|\mathcal{S}_3|\le 2^{\frac{|R|}{3}+\log_2(|R|/2)+(\log_2(|R|/2))^2}.
\end{equation}
Now, for every $S\in\mathcal{S}\setminus (\mathcal{S}_1\cup\mathcal{S}_2\cup\mathcal{S}_3)$, we have that $\Cay(R,S)$ is connected, $R$ is self-normalising in $\Aut(\Cay(R,S))$ and the normaliser of $M$ in $\Aut(\Cay(R,S))$ is $R$.

We set
\begin{align*}
\mathcal{S}_4:=\{S\in\mathcal{S}\setminus (\mathcal{S}_1\cup \mathcal{S}_2\cup \mathcal{S}_3)\mid& \textrm{ the stabilizer in }\Aut(\Cay(R,S)) \textrm{ of the bipartition}\\
&\{M,R\setminus M\} \textrm{ does not act faithfully on }M\}.
\end{align*}
By Lemma~\ref{l:5}, we have
\begin{equation}\label{eq1:4}
|\mathcal{S}_4|\le 2^{\frac{|R|}{4}+(\log_2(|R|/2))^2}.
\end{equation}

We set
\begin{align*}
\mathcal{S}_5:=\mathcal{S}\setminus (\mathcal{S}_1\cup \mathcal{S}_2\cup \mathcal{S}_3\cup \mathcal{S}_4).
\end{align*}
Now, for every $S\in\mathcal{S}_5$, we have that $\Cay(R,S)$ is connected, $R$ is self-normalising in $\Aut(\Cay(R,S))$, the normaliser of $M$ in $\Aut(\Cay(R,S))$ is $R$ and the stabiliser in $\Aut(\Cay(R,S))$ of the bipartition $\{M,R\setminus M\}$ acts faithfully on $M$ (and hence also on $R\setminus M$).

For every $S\in\mathcal{S}\setminus (\mathcal{S}_1\cup\mathcal{S}_2\cup\mathcal{S}_3\cup\mathcal{S}_4)$, choose a subgroup $G^+$ of the stabiliser in $\Aut(\Cay(R,S))$ of the bipartition $\{M,R\setminus M\}$ with $M$ maximal in $G^+$.  Strictly speaking, we should use a notation for $G^+$ witnessing that it depends on $S$, but for not making the notation too cumbersome to use we avoid that.  Observe that $\nor {G^+}M=M$ and that $G^+$ acts faithfully on $M$ and on $R\setminus M$.
In particular, we are in the position to apply~\cite[Theorem~$3.2$]{DSV}, which we report below with our current notation.

\begin{theorem}{{~\cite[Theorem~$3.2$]{DSV}}}\label{thm:main}
Let $G^+$ be a permutation group on $\Delta$ with a maximal abelian regular subgroup $M$ such that $\nor {G^+} M = M$. Let $(G^+)_\delta$ be the stabiliser of the point $\delta\in \Delta$, let $N$ be the core of $M$ in $G^+$. Then there exist a prime $p$ and $Q$ and $W$ with $Q\neq 1\neq W$ such that
\begin{enumerate}
\item $G^{+}/N\cong (G^+)_\delta N/N \rtimes M/N$ acts faithfully as an affine primitive group on the cosets of $M$ in $G^{+}$, \label{yam}
\item $N=\Z {G^+}=Q\times \cent W {(G^+)_\delta}$, \label{yi}
\item $(G^+)_\delta\times Q$ is the unique Sylow $p$-subgroup of $G^+$, \label{yu}
\item  $\nor {G^+} {(G^+)_\delta}=\cent {G^+} {(G^+)_\delta}=(G^+)_\delta\times N$, \label{yog}
\item for all $s,s'\in G^{+}\setminus \nor {G^+} {(G^+)_\delta}$, we have $(G^+)_\delta (G^+)_{\delta^s}=(G^+)_\delta (G^+)_{\delta^{s'}}$. \label{youm}
\end{enumerate}
\end{theorem}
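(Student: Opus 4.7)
The plan is to let the affine-primitive reduction in~(1) drive everything else. For~(1), consider the coset action of $G^+$ on $G^+/M$: since $M$ is maximal in $G^+$, this action is primitive, and its kernel is precisely the core $N$ of $M$. Thus $G^+/N$ is a faithful primitive group with abelian point stabiliser $M/N$, and the self-normalising hypothesis $\nor{G^+}M=M$ descends to $\nor{G^+/N}{M/N}=M/N$. A primitive permutation group with an abelian self-normalising point stabiliser is necessarily of affine type, with an elementary abelian regular socle, by a classical result going back to Burnside and Wielandt. This socle must be $(G^+)_\delta N/N$: since $M$ is regular on $\Delta$ we have $(G^+)_\delta\cap M=1$, hence $(G^+)_\delta\cap N=1$, and a counting argument shows that the image of $(G^+)_\delta$ in $G^+/N$ has the right order to be the affine translation subgroup. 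Consequently $(G^+)_\delta$ itself is elementary abelian of exponent $p$, and the semidirect decomposition of~(1) follows.

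Parts~(2)--(4) are then obtained by centraliser chases on top of the affine structure. Since $M$ is abelian and self-normalising we have $\cent{G^+}M=M$, forcing $\Zent{G^+}\le M$; normality gives $\Zent{G^+}\le N$. For the reverse inclusion, $M$ centralises $N$ (being abelian and containing it), and $(G^+)_\delta$ centralises $N$ because its conjugation action on $N$ factors through the affine quotient, where the regular socle $(G^+)_\delta N/N$ meets $N/N$ trivially; since $G^+=(G^+)_\delta M$, this gives $N\le\Zent{G^+}$. Splitting $N$ into its $p$-part $Q$ and a compatible $p'$-piece $W$ then yields the decomposition $N=Q\times\cent W{(G^+)_\delta}$ of~(2). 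For~(3), combining the elementary abelian $p$-group $(G^+)_\delta$ with the central $p$-group $Q$ produces a subgroup of the correct $p$-order to be a Sylow $p$-subgroup, and the normality of $Q$ together with the affine-quotient structure forces uniqueness. Part~(4) comes from combining the centrality of $N$ with the fact that in an affine primitive group the translation subgroup is self-normalising among complements of the quotient point stabiliser: any element of $G^+$ normalising $(G^+)_\delta$ must centralise its image in $G^+/N$, and by the centrality of $N$ it must then centralise $(G^+)_\delta$ outright, giving $\nor{G^+}{(G^+)_\delta}=\cent{G^+}{(G^+)_\delta}=(G^+)_\delta\times N$.

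For~(5), I would translate the double coset $(G^+)_\delta (G^+)_{\delta^s}$ down to the affine quotient $G^+/N$: there, the regular translation subgroup acts transitively by conjugation on the collection of point stabilisers, so the double coset depends only on whether the conjugating element $s$ lies in $\nor{G^+}{(G^+)_\delta}$ or outside it. Under the ``outside'' hypothesis this double coset is therefore independent of $s$, and the equality lifts back verbatim to $G^+$ because $N$ is central. The main obstacle in the whole argument is clearly the affine reduction in~(1): every subsequent step rests crucially on having an explicit elementary abelian regular socle, and without that classical input on primitive groups with abelian stabilisers (or an independent proof tailored to this setting), parts~(2)--(5) cannot even be formulated cleanly.
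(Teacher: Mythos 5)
First, be aware that the paper does not prove this statement at all: it is imported verbatim (up to notation) from \cite[Theorem~3.2]{DSV}, so the only meaningful comparison is with the proof in that reference, which is considerably more delicate than your sketch. Your opening reduction is sound: the action of $G^+$ on the cosets of $M$ is primitive with kernel $N$, the point stabiliser $M/N$ is abelian and self-normalising, and since a finite group with an abelian maximal subgroup is solvable (Herstein --- this, rather than Burnside/Wielandt, is the input needed), the primitive quotient $G^+/N$ is of affine type with an elementary abelian regular socle $V$. The serious gap is the very next step, which carries essentially the whole weight of the theorem: you identify $V$ with $(G^+)_\delta N/N$ on the grounds that this subgroup ``has the right order''. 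Having order $|V|$ and complementing $M/N$ does not make a subgroup equal to the socle --- that would require it to be \emph{normal} in $G^+/N$, and the normality of $(G^+)_\delta N$ in $G^+$ is precisely the nontrivial content of parts (1), (3) and (4) (it is essentially equivalent to the unique-Sylow-$p$-subgroup assertion). As written, your derivations of (2)--(5) all presuppose this identification, so the argument is circular at its core.

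There are further concrete problems. The claim that $(G^+)_\delta$ centralises $N$ ``because its conjugation action on $N$ factors through the affine quotient'' is vacuous: $N$ is the kernel of that quotient, so the quotient retains no information about conjugation on $N$. (A correct elementary route to $N\le\Zent{G^+}$: $\cent{G^+}{N}$ is a normal subgroup containing the abelian maximal subgroup $M$; it cannot equal $M$, since $M$ is self-normalising and hence not normal, so by maximality it is all of $G^+$.) The nontriviality assertions $Q\neq 1\neq W$ --- which in particular force $\Zent{G^+}\neq 1$, a genuinely substantive claim --- are never addressed. Finally, in (4) and (5) you conflate the two permutation actions: in the quotient $G^+/N$ every point stabiliser $(G^+)_{\delta'}$ with $\delta'\in\Delta$ has the \emph{same} image (namely the socle, once (1) is known), so ``transitivity on point stabilisers downstairs'' says nothing about the double cosets $(G^+)_\delta(G^+)_{\delta^s}$ upstairs; establishing (5) requires analysing how each $(G^+)_{\delta^s}$ sits inside $(G^+)_\delta\times N$, which is exactly the work done in \cite{DSV} and absent here.
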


Consistent with the notation in Theorem~\ref{thm:main}, we let $N$ be the core of  $M$ in $G^+$. We now partition the set $\mathcal{S}_5$ in two subsets:
\begin{align*}
\mathcal{S}_5'&:=\{S\in\mathcal{S}_5\mid (G^+)_1=(G^+)_d, \textrm{ for some }d\in R\setminus M\},\\
\mathcal{S}_5''&:=\mathcal{S}_5\setminus \mathcal{S}_5'.
\end{align*}

\smallskip

\noindent\textsc{Claim: }Fix $d\in R\setminus M$. For every $S\in\mathcal{S}_5'$, there exists a maximal subgroup $K'$ of $M$ and a minimal subgroup $H'$ of $K'$ such that $S\setminus dK'$ is a union of $H'$-cosets.

\smallskip

\noindent
Let $S\in \mathcal{S}_5'$, that is, $(G^+)_1=(G^+)_d$, for some $d\in R\setminus M$.
We apply Theorem~\ref{thm:main} for the action of $G^+$ on $\Delta:=R\setminus M$ and with $\delta:=d$. Observe that from Theorem~\ref{thm:main}, $(G^+)_d$ is a $p$-group and hence $(G^+)_x$ is a $p$-group for every $x\in R$ because $|(G^+)_d|=|(G^+)_x|$.

Let $T:=\nor {G^+}{(G^+)_d}$. By Theorem~\ref{thm:main}~(\ref{yi}), (\ref{yu}) and~(\ref{yog}), $T$ contains the unique Sylow $p$-subgroup of $G^+$ and hence $(G^+)_y\leq T$ for every $y\in \Delta=R\setminus M$. Since $(G^+)_d$ is normal in $T$, it follows that $(G^+)_d(G^+)_y=(G^+)_d(G^+)_y$ is a subgroup of $T$ and
\begin{equation}\label{eq:tofu}(G^+)_1(G^+)_y=(G^+)_d(G^+)_y=(G^+)_y(G^+)_d=(G^+)_y(G^+)_1.
\end{equation}
Let $s\in G^+\setminus T$ and let $$H:=(G^+)_1(G^+)_{d^s}\cap M.$$ By Theorem~\ref{thm:main}~(\ref{youm}) and~\eqref{eq:tofu}, $H$ does not depend on the choice of $s$. Assume $H=1$. As $M$ acts regularly on $M$ and on $R\setminus M$, we have $G^+=M(G^+)_1=M(G^+)_{d^s}$. Therefore, for every $x\in (G^+)_1$, there exists $a\in M$ and $y\in (G^+)_{d^s}$ with $x=ay$. Thus $xy^{-1}=a\in (G^+)_1(G^+)_{d^s}\cap M=H=1$ and hence $x=y$. Since $x$ is an arbitrary element in $(G^+)_1$, this yields $(G^+)_1=(G^+)_{d^s}$, that is, $(G^+)_d=(G^+)_1=(G^+)_{d^s}=((G^+)_d)^s$  and $s\in \nor {G^+}{(G^+)_d}=T$, which is a contradiction. Therefore $H\neq 1$.

Let $K:=N$. By Theorem~\ref{thm:main}~(\ref{yog}), $T\cap M = ((G^+)_d\times N)\cap M=((G^+)_d\cap M)\times N = N= K$ and hence $H\leq K<M$.

Let $x$ in $M\setminus K$. Since $T\cap M=K$, we have $x\notin T$ and $H=(G^+)_1(G^+)_{dx}\cap M$. Since $(G^+)_1(G^+)_{dx}=(G^+)_d(G^+)_{dx}$ is a subgroup containing $(G^+)_d$, it follows that $(dx)^{(G^+)_1(G^+)_{dx}}$ is a block of imprimitivity for $G^+$ and hence also for $M$. Moreover, $(G^+)_1(G^+)_{dx}$ is the stabiliser of this block in $G^+$, hence $(G^+)_1(G^+)_{dx}\cap M=H$ is the stabiliser of this block in $M$, therefore $(dx)^{(G^+)_1(G^+)_{dx}}$ is an $H$-coset, that is, $(dx)^{(G^+)_1(G^+)_{dx}}=dxH$. On the other hand, $(dx)^{(G^+)_1}=(dx)^{(G^+)_{dx}(G^+)_1}=(dx)^{(G^+)_1(G^+)_{dx}}=dxH$. We have shown that every $(G^+)_1$-orbit on $\Delta\setminus dK=(R\setminus M)\setminus dK$ is an $H$-coset.  In particular, our set $S\setminus dK$ is a union of $H$-cosets.

Fix now a maximal subgroup $K'$ of $M$ with $K\le K'$ and $H'$ a minimal subgroup of $H$ with $H'\le H$. Since $S\setminus dK$ is a union of $H$-cosets,  $S\setminus dK'$ is a union of $H'$-cosets.~$_\blacksquare$

\smallskip

Given $H'$ and $K'$ subgroups of $M$ with $1<H'\le K'<M$, the number of subsets $S$ of $R\setminus M$ such that $S\setminus dK'$ is a union of $H'$-cosets is at most
\begin{equation*}
2^{|K|+\frac{|M\setminus K|}{|H|}}=2^{\frac{|M|}{|H|}+|K|\left(1-\frac{1}{|H|}\right)}\le 2^{\frac{|M|}{|H|}+\frac{|M|}{2}\left(1-\frac{1}{|H|}\right)}=2^{\frac{|M|}{2}+\frac{|M|}{2|H|}}\le 2^{\frac{|M|}{2}+\frac{|M|}{4}}=2^{\frac{3|R|}{8}}.
\end{equation*}
The number of maximal subgroups $K'$ of an abelian group $M$ is at most $|M|$ and the number of minimal subgroups $H'$ of a group $K'$ is at most $|K'|\le |M|/2$.
 In particular, from Claim, we deduce
\begin{equation}\label{lastlast}|\mathcal{S}_5'|\le 2^{\frac{3|R|}{8}+\log_2(|R|/2)+\log_2(|R|/4)}=2^{\frac{3|R|}{8}+2\log_2|R|-3}.
\end{equation}

\smallskip

It remains to estimate the cardinality of the set $\mathcal{S}_5''$.
Given a finite group $X$, we write $$f(X):=|\{Y\le X\mid |Y| \textrm{ is prime}\}|$$
and similarly, given a positive integer $n$, we write
$$f(n):=\max\{f(X)\mid X\textrm{ is a group of order }n\}.$$

Let $S\in\mathcal{S}_5''$, that is, there is no $d\in R\setminus M$ with $(G^+)_1= (G^+)_d$. Let $d\in R\setminus M$. From Theorem~\ref{thm:main}~\eqref{yam} and~\eqref{yog}, we see that $(G^+)_d\times N$ is a normal subgroup of $G^+$, from which it follows that $G^+$ fixes setwise each $N$-orbit on $R\setminus M$.

  Let $d\in R\setminus M$. From Theorem~\ref{thm:main}~\eqref{yu}, $(G^+)_d\times Q$ is the unique Sylow $p$-subgroup of $G^+$. As $(G^+)_1$ is a $p$-group,  $(G^+)_1$ is contained in $(G^+)_d\times Q\le (G^+)_d\times N$. In particular, $(G^+)_1$ fixes each $N$-orbit.

For each $d\in R\setminus M$, the action induced by $(G^+)_d\times N$ on $d^N=dN$ is regular (given by the subgroup $N$) and hence the action induced by $(G^+)_1$ on $d^N$ is semiregular and given by some subgroup $X(d)$ of $N$, because $(G^+)_1\le (G^+)_d\times N$. (Observe that $X(d)$ depends on the vertex $d$.) Since $(G^+)_1\ne (G^+)_d$, for every $d\in R\setminus M$,  $(G^+)_1$ has no fixed point on $R\setminus M$ and hence $X(d)\ne 1$.

Let $d_1,\ldots,d_{|M:N|}$ be the representatives of the $N$-orbits on $R\setminus M$ and, for every $i\in \{1,\ldots,|M:N|\}$, let $Y(d_i)$ be any subgroup of  $X(d_i)$ having prime order.

Since the set $S$ is $(G^+)_1$-invariant, $S\cap d_i^{N}$ is $X(d_i)$-invariant, for every $i\in \{1,\ldots,|M:N|\}$. In particular, $S\cap d_i^N$ is also $Y(d_i)$-invariant. When the subgroup $Y(d_i)$ of $N$ is given, the number of possibilities for $S\cap d_i^{N}$ is at most $2^{|N|/|Y(d_i)|}\le 2^{|N|/2}$, because $|Y(d_i)|\ne 1$.

When the subgroup $N$ of $M$ is given, we have at most $f(N)2^{|N|/2}\le f(|N|)2^{|N|/2}$ choices for $S\cap d_i^N$. In particular, when the subgroup $N$ of $M$ is given, since with have $|M|/|N|$ choices for $i$, the number  of choices for $S$ is at most
$$(f(|N|)2^{|N|/2})^{|M|/|N|}=2^{|M|\frac{\log_2(f(|N|))}{|N|}+\frac{|M|}{2}}.$$
Given a divisor $n$ of $|M|$, the number of subgroups $N$ of $M$ having order $n$ is at most $|M|^{\log_2|n|}$ by Fact~\eqref{fact3}. Therefore,
\begin{equation}\label{eq:last}
|\mathcal{S}_5''|\le \sum_{\substack{n\mid |M|\\ 1<n<|M|}}2^{|M|\frac{\log_2(f(n))}{n}+\frac{|M|}{2}+\log_2|M|\log_2n}.
\end{equation}
Observe now that $f(p)=1$, for every prime number $p$. In general, $f(n)\le n-1$, for every $n$. We consider the auxiliary real function $F:[2,|M|/2]\to \mathbb{R}$ defined by $$x\mapsto F(x):=\frac{\log_2(x-1)}{x}|M|+\log_2 x\log_2|M|.$$
We have
\begin{align*}
F'(x)&=\left(\frac{1}{(x-1)x}-\frac{\log(x-1)}{x^2}\right)\frac{|M|}{\log 2}+\frac{\log_2 |M|}{\log 2\cdot x}\ge
\left(\frac{1}{(x-1)x}-\frac{\log(x-1)}{x^2}\right)\frac{\log_2|M|}{\log 2}+\frac{\log_2 |M|}{\log 2\cdot x}\\
&=\frac{\log_2|M|}{\log 2\cdot x^2}\left(\frac{x}{x-1}-\log(x-1)+x\right)\ge 0,
\end{align*}
where the last inequality follows with an easy computation. In particular, $F(x)$ is an increasing function of $x$ and hence
\begin{eqnarray*}
\frac{\log_2(f(n))}{n}|M|+\log_2|M|\log_2n&\le& 2\log_2(|M|/2)+\log_2|M|\log_2(|M|/2)\\
&=&\log_2|M|-2+(\log_2|M|)^2.
\end{eqnarray*}
From this and from~\eqref{eq:last}, we deduce
\begin{eqnarray}\label{lastlastlast}
|\mathcal{S}_5''|&\le& \sum_{\substack{n\mid |M|\\1<n<|M|}}2^{\frac{|M|}{2}+\log_2|M|-2+(\log_2|M|)^2}
\le 2^{\frac{|M|}{2}+\log_2|M|-2+(\log_2|M|)^2}\cdot 2\sqrt{|M|}\\\nonumber
&=& 2^{\frac{|R|}{4}+\frac{3}{2}\log_2|R|-\frac{5}{2}+(\log_2(|R|/2))^2},
\end{eqnarray}
where the second inequality follows from the fact that $|M|$ has at most $2\sqrt{|M|}=2^{1+(\log_2|M|)/2}$ divisors.

In~\eqref{eq1:1},~\eqref{eq1:2},~\eqref{eq1:3},~\eqref{eq1:4},~\eqref{lastlast} and~\eqref{lastlastlast}, we have estimated the cardinalities of the sets $\mathcal{S}_1,\mathcal{S}_2,\mathcal{S}_3,\mathcal{S}_4,\mathcal{S}_5'$ and $\mathcal{S}_5''$. The main contribution comes from the estimate of $\mathcal{S}_2$ in~\eqref{eq1:2} and hence we obtain
$$|\mathcal{S}|\le 5\cdot 2^{\frac{3|R|}{8}+\log_2|R|\cdot (\log_2(|R|/2))}.~\qedhere$$
\end{proof}

\begin{proof}[Proof of Corollary~$\ref{cor:2}$]
Let $R$ be a finite group with an abelian subgroup $M$ having index $2$. If
$$2^{\frac{|R|}{2}}>5\cdot 2^{\frac{3|R|}{8}+\log_2|R|\cdot (\log_2(|R|/2))},$$
then by Theorem~\ref{thrm:1} there exists a subset $S$ of $R\setminus M$ with $\Cay(R,S)$ a DRR. A computation shows that this inequality holds whenever $|R|\ge 640$.

When $|R|< 640$(there should consistent with Remark~\ref{remark1}), the proof follows with a computer computation, see Remark~\ref{remark1}.
\end{proof}

\section{A
route to the proof of Theorem~\ref{thrm:11}}\label{enroute2}

\begin{lemma}\label{example:1}
Let $R$ be a group and let $M$ be an abelian subgroup of $R$ with $|R:M|=2$. Then, there exists a non-identity automorphism $\varphi$ of $R$ with $g^\varphi\in \{g,g^{-1}\}$, for every $g\in R\setminus M$, if and only if $R$ is not generalised dihedral on $M$.
\end{lemma}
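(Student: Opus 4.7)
The plan is to treat the two directions separately; the forward direction is a short observation, while the converse amounts to an explicit construction whose main content is checking the multiplication law.

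\textbf{Only if.} Suppose $R$ is generalised dihedral on $M$, with a distinguished involution $\iota\in R\setminus M$ inverting $M$. Any $g\in R\setminus M$ can be written $g=a\iota$ with $a\in M$, whence $g^2=a\iota a\iota=a\cdot a^\iota=a\cdot a^{-1}=1$. So every element of $R\setminus M$ is an involution, forcing any $\varphi$ with $g^\varphi\in\{g,g^{-1}\}$ on $R\setminus M$ to fix $R\setminus M$ pointwise. Since $m=\tau^{-1}(\tau m)$ with $\tau$ and $\tau m$ both in $R\setminus M$ for every $\tau\in R\setminus M$ and $m\in M$, the set $R\setminus M$ generates $R$, and hence $\varphi=1$.

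\textbf{If.} Suppose $R$ is not generalised dihedral on $M$. Pick any $\tau\in R\setminus M$ and let $\alpha\in\Aut(M)$ denote the restriction to $M$ of conjugation by $\tau$. Because $M$ is abelian, $\alpha$ does not depend on the choice of $\tau\in R\setminus M$, and because $\tau^2\in M$, we have $\alpha^2=1$; in particular $-\alpha\in\Aut(M)$. I propose the candidate
\[
\varphi(g):=\begin{cases} g^{-\alpha}, & g\in M,\\ g^{-1}, & g\in R\setminus M,\end{cases}
\]
and claim that $\varphi$ is the desired automorphism.

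The proof that $\varphi\in\Aut(R)$ splits into checking the homomorphism law $\varphi(gh)=\varphi(g)\varphi(h)$ in the four cases according to which coset of $M$ contains $g$ and which contains $h$. The case $g,h\in M$ is immediate since $-\alpha\in\Aut(M)$. The two mixed cases both reduce to the identity $x u^{-\alpha} x^{-1}=u^{-\alpha^2}=u^{-1}$ for $u\in M$ and $x\in R\setminus M$, which holds because conjugation by any element of $R\setminus M$ acts on $M$ as $\alpha$. The case $g,h\in R\setminus M$ (so $gh\in M$) is the only calculation of substance: writing $g=m\tau$ and $h=m'\tau$ gives $gh=m m'^{\alpha}\tau^2$, and a short computation using $\alpha^2=1$ and the commutativity of $M$ yields $(gh)^{-\alpha}=g^{-1}h^{-1}$. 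Bijectivity is clear because $\varphi$ restricts to bijections on $M$ (via $-\alpha$) and on $R\setminus M$ (via inversion).

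Finally, I would verify $\varphi\neq 1$. If $\varphi|_M=\mathrm{id}$ then $-\alpha=1$, i.e.\ every element of $R\setminus M$ inverts $M$; if $\varphi|_{R\setminus M}=\mathrm{id}$ then every element of $R\setminus M$ is an involution. If both held simultaneously, any $\tau\in R\setminus M$ would be an involution inverting $M$, forcing $R$ to be generalised dihedral on $M$, contrary to hypothesis. The main technical obstacle is the homomorphism check in the case $g,h\in R\setminus M$, but once the identities $\alpha^2=1$ and abelianness of $M$ are in hand, each verification reduces to a routine one-line manipulation.
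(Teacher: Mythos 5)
Your proposal is correct and follows essentially the same route as the paper: your $\varphi$ (inversion on $R\setminus M$, and $m\mapsto (m^{-1})^{\alpha}$ on $M$) is exactly the paper's automorphism $\psi\iota_a$, only defined directly rather than as a composition of the map $m\mapsto m^{-1}$, $ma\mapsto m^{-1}a^{-1}$ with the inner automorphism by $a$. The four-case homomorphism check and the final non-identity argument (ruling out the generalised dihedral case) match the paper's verification.
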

\begin{proof}
Suppose there exists a non-identity automorphism $\varphi$ of $R$ with $g^\varphi\in \{g,g^{-1}\}$, for every $g\in R\setminus M$. If $R$ is generalised dihedral on $M$, then $g^\varphi=g$, for every $g\in R\setminus M$, because the elements in $R\setminus M$ are involutions. Therefore, $\varphi$ fixes $R\setminus M$ pointwise and hence $\varphi$ is the identity automorphism, which is a contradiction.

Conversely, suppose that $R$ is not generalised dihedral on $M$ and fix $a\in R\setminus M$.  Let $\psi:R\to R$ be the mapping  defined by $$m^\psi=m^{-1}\,\,\textrm{   and   }\,\,(ma)^\psi=m^{-1}a^{-1},\quad \forall m\in M.$$ We prove that $\psi$ is an automorphism, that is, $(g_1g_2)^\psi=g_1^\psi g_2^\psi$, for every $g_1,g_2\in R$. Let $m_1$ and $m_2$ be in $M$. Since $M$ is abelian, we have $(m_1m_2)^\psi=(m_1m_2)^{-1}=m_1^{-1}m_2^{-1}=m_1^{\psi}m_2^{\psi}$. Similarly, since $(m_1a)(m_2a)\in M$ and since $a^{-1}m_2^{-1}a^{-1}\in M$, we have $$((m_1a)(m_2a))^\psi=(m_1am_2a)^{-1}=a^{-1}m_2^{-1}a^{-1}m_1^{-1}=m_1^{-1}a^{-1}m_2^{-1}a^{-1}=g_1^\psi g_2^{\psi}.$$
Clearly, we have $$(m_1(m_2a))^\psi=(m_1m_2a)^\psi=(m_1m_2)^{-1}a^{-1}=m_1^{-1}(m_2^{-1}a^{-1})=m_1^\psi (m_2a)^{\psi}.$$
Finally, since $m_2^{a^{-1}}=am_2a^{-1}\in M$ and $a^{-2}\in M$, we have
\begin{eqnarray*}
((m_1a)m_2)^\psi&=&(m_1m_2^{a^{-1}}a)^\psi=m_1^{-1}(m_2^{a^{-1}})^{-1}a^{-1}=m_1^{-1}am_2^{-1}a^{-1}a^{-1}\\
&=&m_1^{-1}am_2^{-1}a^{-2}=m_1^{-1}aa^{-2}m_2^{-1}=m_1^{-1}a^{-1}m_2^{-1}=(m_1a)^\psi m_2^\psi.
\end{eqnarray*}
Therefore $\psi$ is indeed a group automorphism. Let $\iota_a:R\to R$ denote the inner automorphism of $R$ given by the conjugation via $a$ and let $\varphi:=\psi\iota_a$.
For every $m\in M$, we have $$(ma)^\varphi=(ma)^{\psi\iota_a}=(m^{-1}a^{-1})^{\iota_a}=a^{-1}m^{-1}=(ma)^{-1}$$ and hence $g^\varphi=g^{-1}$, for every $g\in R\setminus M$. The automorphism $\varphi$ is the identity automorphism if and only if $\psi=\iota_{a^{-1}}$,
that is, $m^{-1}=m^\psi=m^{a^{-1}}$,
for each $m\in M$, and $a^{-1}=a^\psi=a^{a^{-1}}=a$.
Thus $\varphi$ is the identity automorphism if and only if $R$ is the generalised dihedral group on $M$.
\end{proof}

\begin{lemma}\label{lemma:1}
Let $R$ be a finite group and let  $M$ be a subgroup of $R$ with $|R:M|=2$. Suppose that  $M$  contains an abelian subgroup $Z$ with $|M:Z|=2$ and there exists $a\in R\setminus M$, with $a^2\ne 1$, $a^2\in Z\cap \Zent R$ and $z^a=z^{-1}$, for every $z\in Z$. Then there exists a non-identity automorphism $\varphi$ of $R$ with $g^\varphi\in \{g,g^{-1}\}$, for every $g\in R\setminus M$.
\end{lemma}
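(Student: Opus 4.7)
The plan is as follows. Under the hypotheses, the element $c := a^2$ will turn out to be central of order exactly $2$, and I will construct $\varphi$ as the identity on a carefully chosen index-$2$ subgroup $N$ of $R$ and as multiplication by $c$ on its complement. Since $c \in \Zent{R}$ such a map is automatically a homomorphism, and the substance of the argument lies in choosing $N$ so that $g^{\varphi} \in \{g, g^{-1}\}$ forces itself on $R \setminus M$.

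First I would establish that $o(a) = 4$: because $a^2 \in Z$ and $a$ inverts $Z$, we have $(a^2)^a = (a^2)^{-1}$, but trivially $(a^2)^a = a^2$, so $a^4 = 1$, and with $a^2 \ne 1$ this forces $o(a) = 4$, so $c = a^2$ is a central involution. Next I would pick $b \in M \setminus Z$ and observe $R = Z \sqcup Zb \sqcup Za \sqcup Zba$. The quotient $R/Z$ has order $4$, hence is abelian; the two distinct elements $aZ$ and $bZ$ have order $2$ (because $a^2, b^2 \in Z$), so $R/Z$ is the Klein four-group, and $R$ has exactly three index-$2$ subgroups containing $Z$: namely $M = Z \cup Zb$, $R' := \langle Z, a \rangle = Z \cup Za$, and the ``third'' one, $N := Z \cup Zba$. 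Let $\epsilon : R \to \mathbb{Z}/2\mathbb{Z}$ denote the surjective homomorphism with $\ker \epsilon = N$, and define $\varphi(g) := g \cdot c^{\epsilon(g)}$. Because $c$ is central with $c^2 = 1$, the identity $\varphi(g_1 g_2) = g_1 g_2 c^{\epsilon(g_1) + \epsilon(g_2)} = \varphi(g_1) \varphi(g_2)$ is a one-line verification; $\varphi$ is bijective because it fixes $N$ pointwise and permutes $R \setminus N$ via the involution $g \mapsto gc$; and $\varphi \ne \mathrm{id}$ because $\varphi(a) = ac = a^3 \ne a$. It remains to check the condition on $R \setminus M = Za \sqcup Zba$: for $g = zba \in Zba \subseteq N$ one has $\varphi(g) = g$, and for $g = za \in Za \subseteq R \setminus N$ one computes $\varphi(g) = za \cdot a^2 = za^{-1}$, which equals $(za)^{-1} = a^{-1} z^{-1}$ thanks to the identity $a^{-1} z^{-1} = z a^{-1}$ (a direct consequence of $z^a = z^{-1}$ together with $a^2 \in \Zent{R}$).

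The step I expect to be the main structural obstacle is identifying the correct subgroup $N$. If instead one sets up the sign homomorphism with kernel $M$ (or with kernel $R'$), then the inversion requirement on $R \setminus M$ forces $(zba)^2 = c$ (resp.\ $(zb)^2 = c$), and neither identity holds in general. The choice $N = Z \cup Zba$ is exactly what pairs the coset $Za$, on which $g^2 = a^2 = c$ makes multiplication by $c$ agree with inversion, with the coset $Zba \subseteq N$, on which $\varphi$ is the identity and the required condition is vacuous. Isolating this ``third'' index-$2$ subgroup and exploiting the central involution $c = a^2$ is the structural heart of the proof.
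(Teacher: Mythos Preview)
Your proof is correct and in fact constructs the very same automorphism as the paper (with your $b$ playing the role of the paper's $m$): on $Z\cup Zba$ it is the identity, on $Za$ it sends $g$ to $g^{-1}$ (which equals $ga^2$ there), and on $Zb$ it sends $g$ to $a^2g=ga^2$. The difference lies entirely in the verification. The paper checks the homomorphism property by an extended case analysis over all pairs of cosets of $Z$, including three separately treated mixed cases. You instead recognise the map as $g\mapsto g\cdot c^{\epsilon(g)}$ with $c=a^2$ a central involution and $\epsilon:R\to\mathbb{Z}/2\mathbb{Z}$ a genuine homomorphism, which makes the homomorphism property a one-line computation. This is shorter and more conceptual: it explains \emph{why} the map is an automorphism rather than merely confirming it coset by coset. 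One small point you leave implicit is that $Z\unlhd R$ (needed to form $R/Z$ and to speak of the three index-$2$ subgroups containing $Z$); this holds because $Z$ has index $2$ in $M$ and $a$ normalises $Z$ via $z^a=z^{-1}$, and the paper does state it explicitly.
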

\begin{proof}
Since $|R:M|=|M:Z|=2$ and $a\notin M$, we have $Z\unlhd M$ and $R=\langle M,a\rangle$. As $a$ normalises $Z$, we obtain $Z\unlhd R$. As $a^2\in Z$, we have $a^2=(a^2)^a=(a^2)^{-1}$ and hence $$a^4=1.$$
Observe also that $x^2=a^2$ and $z^x=z^{-1}$, for every $x\in Za$.

Fix $m\in M\setminus Z$. Since $a^2\in Z$ and $a\notin M$, we see that $Z$, $Za$, $Zm$, $Zam$ are the cosets of $Z$ in $R$. We define
$A:=\langle Z,a\rangle$ and $C:=\langle Z,am\rangle$. Observe that $A$, $C$ and $M$ are the three maximal subgroups of $R$ containing $Z$.

We define a mapping $\varphi:R\to R$ by
\[
g^\varphi=
\begin{cases}
g&\textrm{if }g\in C=Z\cup Zam,\\
g^{-1}&\textrm{if }g\in Za=A\setminus Z,\\
a^2g&\textrm{if }g\in Zm=M\setminus Z.\\
\end{cases}
\]
We prove that $\varphi$ is an automorphism of $R$, that is, for every $g_1,g_2\in R$, $(g_1g_2)^\varphi=g_1^\varphi g_2^{\varphi}$. For every $g\in R\setminus Zm$,  from the definition of $\varphi$, it is readily seen that $(g^\varphi)^{-1}=(g^{-1})^\varphi$. If $g\in Zm$, then $(g^{-1})^\varphi=a^2g^{-1}$ and $(g^\varphi)^{-1}=(a^2g)^{-1}=g^{-1}a^{-2}$. Since $a^4=1$ and $a^2\in \Zent R$, we deduce that $(g^{-1})^\varphi=(g^{-1})^\varphi$ also in this case. In particular, the equality
\begin{equation}\label{eq:0}
(g^{-1})^\varphi=(g^{-1})^\varphi
\end{equation} holds for every $g\in R$.

The restriction $\varphi_{|C}$ of $\varphi$ to $C$ is the identity mapping, which is an automorphism of $C$.

We show that the restriction $\varphi_{|A}$ of $\varphi$ to $A$ is also an automorphism of $A$. If $g_1,g_2\in Z$, then $(g_1g_2)^\varphi=g_1g_2=g_1^\varphi g_2^{\varphi}$. If $g_1\in Z,g_2\in Za$, then $g_1g_2\in Za$ and hence $$(g_1g_2)^\varphi=(g_1g_2)^{-1}=g_2^{-1}g_1^{-1}=(g_1^{-1})^{g_2}g_2^{-1}=g_1g_2^{-1}=g_1^\varphi g_2^\varphi.$$ If $g_1\in Za$ and $g_2\in Z$, then $g_1g_2\in Za$ and hence
$$(g_1g_2)^\varphi=(g_1g_2)^{-1}=g_2^{-1}g_1^{-1}=g_1^{-1}(g_2^{-1})^{g_1^{-1}}=g_1^{-1}g_2=g_1^\varphi g_2^\varphi.$$
Finally, if $g_1,g_2\in Za$, then $g_1g_2\in Z$ and hence $$(g_1g_2)^\varphi=g_1g_2=(g_1^{-1}g_1^2)(g_2^{-1}g_2^{2})=g_1^{-1}a^2g_2^{-1}a^2=g_1^{-1}g_2^{-1}a^4=g_1^{-1}g_2^{-1}=g_1^\varphi g_2^\varphi.$$

We show that the restriction $\varphi_{|M}$ of $\varphi$ to $M$ is an automorphism. If $g_1,g_2\in Z$, then $(g_1g_2)^\varphi=g_1g_2=g_1^\varphi g_2^{\varphi}$. If $g_1\in Zm$ and $g_2\in Z$, then $g_1g_2\in Zm$; since $a^4=1$ and $a^2$$\in \Zent R$,
we obtain $$(g_1g_2)^\varphi=a^2g_1g_2=(a^2g_1)g_2=g_1^\varphi g_2^\varphi.$$ If $g_1\in Z$ and $g_2\in Zm$, then $g_1g_2\in Zm$ and  $$(g_1g_2)^\varphi=a^2(g_1g_2)=g_1(a^2g_2)=g_1^\varphi g_2^\varphi.$$ Finally, if $g_1,g_2\in Zm$. Then $g_1g_2\in Z$ and hence $$(g_1g_2)^\varphi=g_1g_2=a^4g_1g_2=(a^2g_1)(a^2g_2)=g_1^\varphi g_2^\varphi.$$

For the rest of the proof we may suppose that $g_1,g_2$ are not both in $A$, or in $M$, or in $C$. Moreover, using~\eqref{eq:0}, we may reduce to consider only the following cases:
\begin{enumerate}
\item[Case 1:]$g_1\in Za$ and $g_2\in Zm$,
\item[Case 2:]$g_1\in Za$ and $g_2\in Zam$,
\item[Case 3:]$g_1\in Zm$ and $g_2\in Zam$.
\end{enumerate}

\smallskip

\noindent\textsc{Case 1: }$g_1=z_1a$ and $g_2=z_2m$, for some $z_1,z_2\in Z$.

\smallskip

\noindent Here, $g_1g_2\in Zam$ and hence $(g_1g_2)^\varphi=g_1g_2=z_1az_2m=z_1z_2^{a^{-1}}am=z_1z_2^{-1}am$, $g_1^\varphi=g_1^{-1}=a^{-1}z_1^{-1}$ and $g_2^\varphi=a^2g_2=a^2z_2m$. Therefore
$$g_1^\varphi g_2^\varphi=a^{-1}z_1^{-1}a^2z_2m=z_1z_2^{-1}a^{-1}a^2m=
z_1z_2^{-1}am=g_1g_2=(g_1g_2)^\varphi.$$

\smallskip

\noindent\textsc{Case 2: }$g_1=z_1a$ and $g_2=z_2am$, for some $z_1,z_2\in Z$.

\smallskip

\noindent Here, $g_1g_2\in Zm$ and hence
$$(g_1g_2)^\varphi=a^2g_1g_2=a^2z_1az_2am=a^{-1}z_1^{-1}z_2am=g_1^{-1}g_2=g_1^\varphi g_2^\varphi.$$

\smallskip

\noindent\textsc{Case 3: }$g_1=z_1m$ and $g_2=z_2am$, for some $z_1,z_2\in Z$.

\smallskip

\noindent Here, $g_1g_2\in Za$ and hence
$$(g_1g_2)^\varphi=g_2^{-1}g_1^{-1}=
m^{-1}a^{-1}z_2^{-1}m^{-1}z_1^{-1}.$$
Now, the element $a':=m^{-1}a^{-1}z_2^{-1}m^{-1}$ lies in $Za$ and hence it acts by conjugation on $Z$ inverting its elements. In particular, $a'z_1^{-1}=z_1a'$ and hence
$$(g_1g_2)^\varphi=z_1m^{-1}a^{-1}z_2^{-1}m^{-1}.$$
Since $a'\in Za$, we have $a'^2=a^2$ and hence $a'=a^2a'^{-1}=a^2mz_2am$.
In particular, we deduce
\begin{eqnarray*}
(g_1g_2)^\varphi=a^2z_1maz_2m=(a^2z_1m)(z_2am)=g_1^\varphi g_2^\varphi.
\end{eqnarray*}

\smallskip

Summing up, $\varphi$ is an automorphism of $R$. By construction, $Zam\subseteq \{g\in R\mid g^\varphi=g\}$ and $Za\subseteq \{g\in R\mid g^\varphi=g^{-1}\}$ and hence $g^\varphi\in \{g,g^{-1}\}$, for every $g\in R\setminus M$.

If $\varphi$ is the identity automorphism of $R$, then by the definition of $\varphi$ we infer $a^2g=g$, for every $g\in Zm$, that is, $a^2=1$. However, this contradicts the hypothesis that $a^2\ne 1$.
\end{proof}

\begin{lemma}\label{lemma:3}
Let $R$ be a finite group, let $M$ be a subgroup of $R$ with $|R:M|=2$, $|M:\Zent M|=4$ and $\gamma_2(M)=\langle a^2\rangle$ for some $a\in R\setminus M$  such that $o(a)=4$, $z^a=z^{-1}$ for every $z\in \Zent M$, and $o(am)\ne 2$ for some $m\in M\setminus \Zent M$.  Then there exists a non-identity automorphism $\varphi$ of $R$ with $g^\varphi\in \{g,g^{-1}\}$, for every $g\in R\setminus M$.
\end{lemma}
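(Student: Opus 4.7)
My plan is to generalize the piecewise construction of Lemma~\ref{lemma:1} to the present non-abelian setting. First I would gather structural data. Write $Z:=\Zent M$. Non-abelianness of $M$ (since $\gamma_2(M)=\langle a^2\rangle\ne 1$) combined with $|M:Z|=4$ forces $M/Z\cong C_2\times C_2$. Since $a^2\in Z$ commutes with $M$ and with $a$, we have $a^2\in\Zent R$, and $o(a)=4$ gives $a^4=1$. Let $m\in M\setminus Z$ be the element supplied by the hypothesis $o(am)\ne 2$, and choose $m'\in M\setminus\langle Z,m\rangle$. Then $[m,m']$ is a nontrivial element of $\gamma_2(M)=\langle a^2\rangle$, hence $[m,m']=a^2$, and $R$ decomposes into the eight $Z$-cosets $Z$, $Zm$, $Zm'$, $Zmm'$, $Za$, $Zam$, $Zam'$, $Zamm'$.

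The next step is to define the candidate automorphism $\varphi\colon R\to R$ piecewise on these cosets, using the three primitives of Lemma~\ref{lemma:1}: identity, inversion, and multiplication by the central element $a^2$. Modelled on Lemma~\ref{lemma:1} applied to the abelian index-$2$ subgroup $\langle Z,m\rangle$ of $M$, the natural anchoring choices are that $\varphi$ acts as the identity on $Z\cup Zam$, as inversion on $Za$, and as multiplication by $a^2$ on $Zm$. For the remaining four cosets $Zm'$, $Zmm'$, $Zam'$, $Zamm'$, I would make the assignment of identity or $a^2$-twist that is dictated by the commutator identity $[m,m']=a^2$: since $m$ and $m'$ fail to commute precisely by the central factor $a^2$, the factor $a^2$ must be inserted or omitted on the $m'$-cosets in the unique way that cancels this discrepancy when $\varphi$ is required to respect multiplication.

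The main obstacle is verifying that this piecewise map is a group homomorphism, i.e.\ $(g_1g_2)^\varphi=g_1^\varphi g_2^\varphi$ for all $g_1,g_2\in R$. This is a case analysis indexed by the pair of $Z$-cosets containing $g_1$ and $g_2$; the potential $64$ cases reduce markedly by using the symmetry $(g^{-1})^\varphi=(g^\varphi)^{-1}$, which one checks directly from the piecewise definition, by grouping cases according to the coset of the product $g_1g_2$, and by repeatedly invoking $a^2\in\Zent R$ and $[m,m']=a^2$. Once $\varphi$ is known to be an automorphism, the property $g^\varphi\in\{g,g^{-1}\}$ for $g\in R\setminus M$ is immediate from the definition, and $\varphi\ne\mathrm{id}$ follows from $a^\varphi=a^{-1}\ne a$ (since $a^2\ne 1$). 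The role of the hypothesis $o(am)\ne 2$ is to guarantee the compatibility of the ``identity on $Zam$'' and ``inversion on $Za$'' pieces: if every element of $Zam$ were an involution then $m^am=a^2$, and the homomorphism condition applied to the pair $(a,m)\in Za\times Zm$ would force a relation incompatible with the prescribed $a^2$-twist on $Zm$.
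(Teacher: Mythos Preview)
Your overall strategy---define $\varphi$ piecewise on the eight $Z$-cosets and then verify the homomorphism law case by case---matches the paper's. However, the specific anchoring you propose does not match and, as written, does not close.

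The paper fixes $\varphi$ to be the identity on $Z\cup Za$, \emph{inversion} on the remaining three cosets $Zam_1\cup Zam_2\cup Zam_3$ of $R\setminus M$, and the uniform ``twisted inversion'' $g\mapsto a^{-1}g^{-1}a^{-1}$ on all of $M\setminus Z$. Two points are worth stressing. First, the map on $M\setminus Z$ is \emph{not} the $a^2$-twist of Lemma~\ref{lemma:1}; for non-abelian $M$ one has $a^{-1}g^{-1}a^{-1}\neq a^2g$ in general, and it is precisely this twisted inversion (together with $\gamma_2(M)=\langle a^2\rangle$) that makes the commutator identities $[m_i^{-1},a^{-1}m_j^{-1}a^{-1}]=a^2$ cancel in the verification. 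Second, the paper treats the three nontrivial $Z$-cosets of $M$ symmetrically; your choice singles out $Zm$ and leaves the other cosets to be filled in, which forces a further case split depending on how conjugation by $a$ permutes $M/Z$---a split you do not carry out.

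There is also a concrete inconsistency in your recipe: you propose ``identity or $a^2$-twist'' on the four remaining cosets, but two of those, $Zam'$ and $Zamm'$, lie in $R\setminus M$, where the conclusion demands $g^\varphi\in\{g,g^{-1}\}$. An $a^2$-twist there gives neither unless $g^2=a^2$, which is not assumed. Finally, your reading of the hypothesis $o(am)\neq 2$ is off: in the paper it is used only at the very end to show $\varphi\neq\mathrm{id}$ (since $\varphi$ inverts every element of $(R\setminus M)\setminus Za$, if $\varphi=\mathrm{id}$ then all such elements are involutions). It is not a compatibility condition, and your alternative justification (``$a^\varphi=a^{-1}\neq a$'') would make the hypothesis superfluous in your scheme, which should already signal that something is amiss with the anchoring.
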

\begin{proof}
Let $Z:=\Zent M$. Write $M=Zm_0\cup Zm_1\cup Zm_2\cup Zm_3$, with $m_0:=1$ and for some $m_1,m_2,m_3\in M$. We define a mapping $\varphi:R\to R$ by
\[g^\varphi=
\begin{cases}
a^{-1}g^{-1}a^{-1}&\textrm{if }g\in M\setminus Z=Zm_1\cup Zm_2\cup Zm_3,\\
g&\textrm{if }g\in Z\cup Za,\\
g^{-1}&\textrm{if }g\in Zm_1a\cup Zm_2a\cup Zm_3a.
\end{cases}
\]
From the definition of $\varphi$, we have $Za\subseteq \cent R\varphi$ and $Zm_1a\cup Zm_2a\cup Zm_3a\subseteq \centm R \varphi$ and hence
$R\setminus M=Za\cup Zm_1a\cup Zm_2a\cup Zm_3a\subseteq \cent R\varphi\cup \centm R\varphi$. Therefore it remains to show that $\varphi$ is a non-identity automorphism of $R$.

Since $\langle a^2\rangle=\gamma_2(M)$ and $o(a^2)=2$, we have $a^2\in \Zent M$. Therefore, for every $g\in M$, we have $a^2g=ga^2$, that is, $aga=a^{-1}ga^{-1}$. From this it follows that, for every $g\in M\setminus Z$, we have $(g^\varphi)^{-1}=(a^{-1}g^{-1}a^{-1})=aga=a^{-1}ga^{-1}=(g^{-1})^\varphi$. In all other cases it is readily seen from the definition of $\varphi$ that this equality is also satisfied and hence
\begin{equation}\label{eq:-1}
(g^{-1})^\varphi=(g^\varphi)^{-1},\quad \forall g\in R.
\end{equation}

Define $A:=Z\cup Za$. Observe that $\varphi_{|A}$ is the identity mapping and hence it is an automorphism of $A$.

Next, we show that $\varphi_{|M}$ is an automorphism of $M$. If $g_1,g_2\in Z$, then $g_1g_2\in Z$ and hence $(g_1g_2)^\varphi=g_1g_2=g_1^\varphi g_2^\varphi$. If $g_1\in Z$ and $g_2\in M\setminus Z$, then $g_1g_2\in M\setminus Z$ and hence $(g_1g_2)^\varphi=a^{-1}(g_1g_2)^{-1}a^{-1}=a^{-1}g_2^{-1}g_1^{-1}a^{-1}$. Since $g_1\in Z=\Zent M$ and $g_1^{a}=g_1^{-1}$, we deduce
$$(g_1g_2)^\varphi=a^{-1}g_1^{-1}g_2^{-1}a^{-1}=g_1a^{-1}g_2^{-1}a^{-1}=g_1^\varphi g_2^\varphi.$$
If $g_1\in M\setminus Z$ and $g_2\in Z$, then the equality $(g_1g_2)^{\varphi}=g_1^\varphi g_2^\varphi$ holds by the previous case and by~\eqref{eq:-1}. Finally, suppose $g_1,g_2\in M\setminus Z$, that is,
$g_1=z_1m_i$ and $g_2=z_2m_j$, for some $z_1,z_2\in Z$ and for some $i,j\in \{1,2,3\}$. Let us
first consider the case that $i=j$. Then $g_1g_2\in Z$ and hence $(g_1g_2)^\varphi=g_1g_2$. On the
other hand, as $a^2\in \Zent M$, $o(a)=4$ and $(g_2g_1)^a=(g_2g_1)^{-1}$, we have
\begin{eqnarray*}
g_1^\varphi g_2^\varphi &=&
a^{-1}g_1^{-1}a^{-1}a^{-1}g_2^{-1}a^{-1}
=
a^{-1}g_1^{-1}a^{-2}g_2^{-1}a^{-1}=a^{-1}g_1^{-1}g_2^{-1}a^{-3}\\
&=&
a^{-1}(g_2g_1)^{-1}a=g_2g_1=g_1g_2=(g_1g_2)^\varphi.
\end{eqnarray*}
 Suppose
now that $i\ne j$ and let $k$$\in \{1,2,3\}$
with $\{1,2,3\}=\{i,j,k\}$. Now, $g_1g_2\in Zm_k$ and hence
\begin{eqnarray*}
(g_1g_2)^\varphi&=&
a^{-1}g_2^{-1}g_1^{-1}a^{-1}=a^{-1}m_j^{-1}z_2^{-1}m_i^{-1}z_1^{-1}a^{-1}\\
&=&a^{-1}m_j^{-1}m_i^{-1}z_1^{-1}z_2^{-1}a^{-1}=a^{-1}m_j^{-1}m_i^{-1}a^{-1}z_1z_2.
\end{eqnarray*}
Since the commutator subgroup of $M$ is $\langle a^2\rangle$, we obtain that $m_j^{-1}m_i^{-1}m_jm_i=a^2$ and hence
$m_j^{-1}m_i^{-1}=a^2m_i^{-1}m_j^{-1}$. Thus
$$(g_1g_2)^\varphi=am_i^{-1}m_j^{-1}a^{-1}z_1z_2.$$
On the other hand, with similar computations we obtain
\begin{eqnarray*}
g_1^\varphi g_2^\varphi&=a^{-1}m_i^{-1}z_1^{-1}a^{-2}m_j^{-1}z_2^{-1}a^{-1}=
am_i^{-1}m_j^{-1}z_1^{-1}z_2^{-1}a^{-1}=
am_i^{-1}m_j^{-1}a^{-1}z_1z_2=(g_1g_2)^\varphi.
\end{eqnarray*}
This shows that $\varphi_{|M}$ is an automorphism of $M$.

For the rest of the proof we may suppose that $g_1 , g_2$ are not both in $A$ or in $M$. Moreover,
using~\eqref{eq:-1}, we may reduce to consider only the following cases:
\begin{enumerate}
\item[Case 1:]$g_1\in Zm_1\cup Zm_2\cup Zm_3$ and $g_2\in Za$,
\item[Case 2:]$g_1\in Zm_1\cup Zm_2\cup Zm_3$ and $g_2\in Zam_1\cup Zam_2\cup Zam_3$,
\item[Case 3:]$g_1\in Zam_1\cup Zam_2\cup Zam_3$ and $g_2\in Zam_1\cup Zam_2\cup Zam_3$,
\item[Case 4:]$g_1\in Zam_1\cup Zam_2\cup Zam_3$ and $g_2\in Za$.
\end{enumerate}

\smallskip

\noindent\textsc{Case 1: }$g_1=z_1m_i$ and $g_2= z_2a$, for some $z_1,z_2\in Z$ and for some $i\in \{1,2,3\}$.

\smallskip

\noindent Here, $g_1g_2\in Zm_ia\subseteq Zam_1\cup Zam_2\cup Zam_3$ and hence $(g_1g_2)^{\varphi}=g_2^{-1}g_1^{-1}=a^{-1}z_2^{-1}m_i^{-1}z_1^{-1}$. Moreover, $g_1^\varphi=a^{-1} g_1^{-1}a^{-1}=a^{-1}m_i^{-1}z_1^{-1}a^{-1}$ and $g_2^\varphi=g_2=z_2a$. Thus
$$g_1^\varphi g_2^\varphi=a^{-1}m_i^{-1}z_1^{-1}a^{-1}z_2a=a^{-1}m_i^{-1}z_1^{-1}z_2^{-1}=a^{-1}z_2^{-1}m_i^{-1}z_1^{-1}=(g_1g_2)^\varphi.$$

\smallskip

\noindent\textsc{Case 2: }$g_1=z_1m_i$ and $g_2= z_2am_j$, for some $z_1,z_2\in Z$ and for some $i,j\in \{1,2,3\}$.

\smallskip

\noindent Clearly, $g_1g_2\in Zm_iam_j$. Suppose first $i$ and $j$ are such that $m_iam_j\in Za$. In particular, there exists $z\in Z$ with $m_iam_j=za$. Then $g_1g_2\in Za$ and hence $$(g_1g_2)^\varphi=g_1g_2=z_1m_i z_2am_j=z_1z_2m_iam_j=z_1z_2za.$$ Moreover,
$g_1^\varphi=a^{-1}g_1^{-1}a^{-1}=a^{-1}m_i^{-1}z_1^{-1}a^{-1}$ and $g_2^{\varphi}=g_2^{-1}=m_j^{-1}a^{-1}z_2^{-1}$. Thus
\begin{eqnarray*}
g_1^\varphi g_2^\varphi=a^{-1}m_i^{-1}z_1^{-1}a^{-1}m_j^{-1}a^{-1}z_2^{-1}.
\end{eqnarray*}
Since $m_iam_j=za$, we have $m_j^{-1}a^{-1}m_i^{-1}=a^{-1}z^{-1}$ and so $m_j^{-1}a^{-1}=a^{-1}z^{-1}m_i$ and hence
\begin{eqnarray*}
g_1^\varphi g_2^\varphi&=&
a^{-1}m_i^{-1}z_1^{-1}a^{-1}(a^{-1}z^{-1}m_i)z_2^{-1}=
a^{-1}m_i^{-1}z_1^{-1}a^{-2}z^{-1}m_iz_2^{-1}
=a^{-3}z_1^{-1}z^{-1}z_2^{-1}\\
&=&az_1^{-1}z_2^{-1}z^{-1}=z_1z_2za=(g_1g_2)^\varphi.
\end{eqnarray*}

Suppose next that $i$ and $j$ are such that $m_iam_j\in Zam_1\cup Zam_2\cup Zam_3$ and hence $m_iam_j=zam_k$, for some $z\in Z$ and for some $k\in \{1,2,3\}$. Then $g_1g_2\in Zam_1\cup Zam_2\cup Zam_3$ and hence $(g_1g_2)^\varphi=(g_1g_2)^{-1}=m_j^{-1}a^{-1}z_2^{-1}m_i^{-1}z_1^{-1}$. Moreover, $g_1^\varphi=a^{-1}g_1^{-1}a^{-1}=a^{-1}m_i^{-1}z_1^{-1}a^{-1}$ and $g_2^{\varphi}=g_2^{-1}=m_j^{-1}a^{-1}z_2^{-1}$.
Thus
$$(g_1g_2)^\varphi=m_j^{-1}a^{-1}z_2^{-1}m_i^{-1}z_1^{-1}=m_j^{-1}a^{-1}m_i^{-1}z_1^{-1}z_2^{-1}.$$
Now, as $a^{-1}m_j^{-1}a^{-1}\in M$, we obtain $z_1^{-1}(a^{-1}m_j^{-1}a^{-1})=(a^{-1}m_j^{-1}a^{-1})z_1^{-1}$ and hence
\begin{eqnarray*}
g_1^\varphi g_2^\varphi&=&a^{-1}m_i^{-1}z_1^{-1}a^{-1}m_j^{-1}a^{-1}z_2^{-1}=a^{-1}m_i^{-1}a^{-1}m_j^{-1}a^{-1}z_1^{-1}z_2^{-1}.
\end{eqnarray*}
Now, we apply the commutator formula $xy=yx[x,y]$ with $x:=m_i^{-1}$ and $y:=a^{-1}m_j^{-1}a^{-1}$ (for simplicity set $z':=[m_i^{-1},a^{-1}m_j^{-1}a^{-1}]$). We get
\begin{eqnarray*}
g_1^\varphi g_2^\varphi&=&a^{-1}(a^{-1}m_j^{-1}a^{-1})m_i^{-1}z'z_1^{-1}z_2^{-1}=a^{-2}m_j^{-1}a^{-1}m_i^{-1}z'z_1^{-1}z_2^{-1}\\
&=&m_j^{-1}a^{-1}m_i^{-1}z_1^{-1}z_2^{-1}z'a^{-2}=(g_1g_2)^\varphi z'a^{-2}.
\end{eqnarray*}
From this it  follows that $(g_1g_2)^\varphi=g_1^\varphi g_2^\varphi$ if and only if $z'=a^2$, that is, $[m_i^{-1},a^{-1}m_j^{-1}a^{-1}]=a^2$. Since $\gamma_2(M)=\langle a^2\rangle$, this happens if and only if $a^{-1}m_j^{-1}a^{-1}\notin Zm_i$, that is, $a^{-1}m_j^{-1}a^{-1}m_i^{-1}\notin Z$. Recall that $m_iam_j\in Zam_k$ and hence $a^{-1}m_j^{-1}a^{-1}m_i^{-1}\in a^{-1}(Zm_k)a\ne Z$, because  $k\in \{1,2,3\}$.

\smallskip

\noindent\textsc{Case 3: }$g_1=z_1am_i$ and $g_2= z_2am_j$, for some $z_1,z_2\in Z$ and for some $i,j\in \{1,2,3\}$.

\smallskip

\noindent Here, $g_1g_2\in Zam_iam_j\subseteq M=Z\cup Zm_1\cup Zm_2\cup Zm_3$. We distinguish two cases depending on whether $am_iam_j\in Z$ or $am_iam_j\in Zm_1\cup Zm_2\cup Zm_3$. We start with the first case: $am_iam_j=z$, for some $z\in Z$. Thus $g_1g_2\in Z$ and hence $$(g_1g_2)^\varphi=g_1g_2=z_1am_iz_2am_j=z_1am_iam_jz_2^{-1}=z_1zz_2^{-1}.$$
On the other hand, $g_1^{\varphi}=g_1^{-1}$ and $g_2^\varphi=g_2^{-1}$ and hence
\begin{eqnarray*}
g_1^\varphi g_2^\varphi&=&m_i^{-1}a^{-1}z_1^{-1}m_j^{-1}a^{-1}z_2^{-1}.
\end{eqnarray*}
Since $am_iam_j=z$, we get $m_j^{-1}a^{-1}=z^{-1}am_i$ and hence
\begin{eqnarray*}
g_1^\varphi g_2^\varphi&=&m_i^{-1}a^{-1}z_1^{-1}(m_j^{-1}a^{-1})z_2^{-1}=
m_i^{-1}a^{-1}z_1^{-1}(z^{-1}am_i)z_2^{-1}\\
&=&m_i^{-1}(z_1^{-1}z^{-1})^am_iz_2^{-1}
=
m_i^{-1}z_1zm_iz_2^{-1}=z_1zz_2^{-1}=(g_1g_2)^\varphi.
\end{eqnarray*}

Finally, suppose that $am_iam_j=zm_k$, for some $k\in \{1,2,3\}$. In this case, we have $g_1g_2\in Zm_k$ and hence $(g_1g_2)^\varphi=a^{-1}(g_1g_2)^{-1}a^{-1}$ and hence
\begin{eqnarray*}
(g_1g_2)^\varphi&=&a^{-1}m_j^{-1}a^{-1}z_2^{-1}m_i^{-1}a^{-1}z_1^{-1}a^{-1}=a^{-1}m_j^{-1}a^{-1}z_2^{-1}m_i^{-1}(z_1^{-1})^aa^{-2}\\
&=&a^{-1}m_j^{-1}a^{-1}m_i^{-1}z_1z_2^{-1}a^{-2}.
\end{eqnarray*}
On the other hand,
$g_1^{\varphi}=g_1^{-1}$ and $g_2^\varphi=g_2^{-1}$ and hence
\begin{eqnarray*}
g_1^\varphi g_2^\varphi&=&m_i^{-1}a^{-1}z_1^{-1}m_j^{-1}a^{-1}z_2^{-1}=
m_i^{-1}a^{-1}m_j^{-1}a^{-1}z_1z_2^{-1}.
\end{eqnarray*}
Now, we apply the commutator formula $xy=yx[x,y]$ with $x:=m_i^{-1}$ and $y:=a^{-1}m_j^{-1}a$; for simplicity write $z':=[m_i^{-1},a^{-1}m_j^{-1}a^{-1}]$. We obtain
\begin{eqnarray*}
g_1^\varphi g_2^\varphi&=&m_i^{-1}(a^{-1}m_j^{-1}a^{-1})z_1z_2^{-1}=(a^{-1}m_j^{-1}a^{-1})m_i^{-1}z'z_1z_2^{-1}\\
&=&a^{-1}m_j^{-1}a^{-1}m_i^{-1}z_1z_2^{-1}z'=(g_1g_2)^\varphi a^{-2}z'.
\end{eqnarray*}
From this it  follows that $(g_1g_2)^\varphi=g_1^\varphi g_2^\varphi$ if and only if $z'=a^2$, that is, $[m_i^{-1},a^{-1}m_j^{-1}a^{-1}]=a^2$. Since $\gamma_2(M)=\langle a^2\rangle$, this happens if and only if $a^{-1}m_j^{-1}a^{-1}\notin Zm_i$, that is, $a^{-1}m_j^{-1}a^{-1}m_i^{-1}\notin Z$. Recall that $am_iam_j\in Zm_k$ and hence $m_j^{-1}a^{-1}m_i^{-1}a^{-1}\in Zm_k$; therefore $a^{-1}m_j^{-1}a^{-1}m_i^{-1}=(m_j^{-1}a^{-1}m_i^{-1}a^{-1})^a\notin Z$,  because  $k\in \{1,2,3\}$.

\smallskip

\noindent\textsc{Case 4: }$g_1=z_1am_i$ and $g_2= z_2a$, for some $z_1,z_2\in Z$ and for some $i\in \{1,2,3\}$.

\smallskip

\noindent Here, $g_1g_2\in Z\cup Zm_1\cup Zm_2\cup Zm_3=M$. Set $g_1':=g_1$ and $g_2':=g_2m_1$. We have $(g_1'g_2')^\varphi=g_1'^\varphi g_2'^\varphi$, from Case~3 applied to $g_1'$ and $g_2'$. Moreover, $g_2'^\varphi=(g_2m_1)^\varphi=g_2^\varphi m_1^\varphi$, from Case~1 and~\eqref{eq:-1}. Since $g_1g_2=(g_1'g_2')m_1^{-1}$ and $g_1'g_2'\in Z\cup Zm_1\cup Zm_2\cup Zm_3$, we deduce from the fact that $\varphi_{|M}$ is an automorphism that
\begin{align*}
(g_1g_2)^\varphi&=(g_1'g_2')^\varphi (m_1^{-1})^\varphi=g_1'^\varphi g_2'^\varphi(m_1^{-1})^\varphi=g_1^\varphi g_2^\varphi m_1^\varphi(m_1^{-1})^\varphi=g_1^\varphi g_2^\varphi.
\end{align*}

Summing up we have shown that $\varphi$ is an automorphism of $G$ and it remains to show that $\varphi$ is not the identity. If $\varphi$ is the identity automorphism, then $g^{-1}=g$, for every $g\in Zam_1\cup Zam_2\cup Zam_3$. However, this contradicts the fact that $o(am)\ne 2$, for some $a\in M\setminus \Zent M$.
\end{proof}

\begin{proof}[Proof of Theorem~$\ref{thrm:11}$]
From Lemmas~\ref{example:1},~\ref{lemma:1} and~\ref{lemma:3}, we see that, if part~\eqref{thrmeq:1},~\eqref{thrmeq:2} or~\eqref{thrmeq:3} holds, then $R$ admits a non-identity automorphism $\varphi$ of $R$ with $g^\varphi\in \{g,g^{-1}\}$, for every $g\in R\setminus M$. We now need to prove the converse and hence we suppose there exists a non-identity automorphism $\varphi$ of $R$ with $g^\varphi\in \{g,g^{-1}\}$, for every $g\in R\setminus M$.
In particular, $R=M\cup \cent R \varphi\cup\centm R\varphi$. Assume first that $\cent R\varphi\le M$. Then $R=M\cup \centm R\varphi$ and hence
$$|\centm R\varphi|\ge |R\setminus M|=|R|/2.$$
In particular, $R$ is a group admitting an automorphism inverting at least half of its elements. In~\cite{F,HeMa,LM,Potter}, these groups are refereed to as $1/2$-groups. Strictly speaking, we do not need the classification of the $1/2$-groups arising from the work in~\cite{F,HeMa,LM,Potter}, however our elementary argument owns a great deal to some of the arguments therein.
Let $a\in R\setminus M$ and let $m\in M$. Since $\varphi$ inverts each element in $R\setminus M$, we deduce
$$m^\varphi=(maa^{-1})^\varphi=(ma)^\varphi(a^{-1})^\varphi=a^{-1}m^{-1}a.$$
In particular, the restriction $\varphi_{|M}$ of $\varphi$ to $M$ is given by the mapping defined by $m\mapsto a^{-1}m^{-1}a$, for every $m\in M$. Since $\varphi_{|M}$ does not depend upon $a\in R\setminus M$, we deduce that$$a_1^{-1}m^{-1}a_1=m^\varphi=a_2^{-1}m^{-1}a_2,\quad \forall a_1,a_2\in R\setminus M,\forall m\in M.$$  Therefore, $a_2a_1^{-1}$ centralises $M$. Since $a_1$ and $a_2$ are two arbitrary elements of $R\setminus M$ and since $|R:M|=2$, we deduce that $a_2a_1^{-1}$ is an arbitrary element of $M$ and hence $M$ is abelian. Therefore, Lemma~\ref{example:1} shows that $R$ is not generalised abelian over $M$ and we obtain part~\eqref{thrmeq:1}.

\smallskip

Assume then $\cent R \varphi\nleq M$. As $\varphi$ is not the identity automorphism, we also have $M\nleq \cent R \varphi$.
 For simplicity, we set $C:=\cent R \varphi$ and $C^-:=\centm R\varphi$ and we define $$Z:=M\cap C.$$
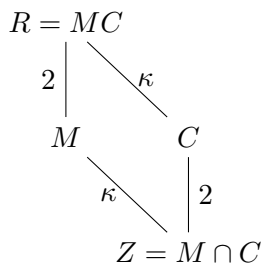
\begin{figure}[!h]
\begin{tikzpicture}[node distance   = 1cm ]
        \node(A){$R=MC$};
\node[below=of A](B){$M$};
\node[right=of B](C){$C$};
\node[below=of C](D){$Z=M\cap C$};
\draw(A)--node[left]{$2$}(B);
\draw(A)--node[right]{$\kappa$}(C);
\draw(D)--node[left]{$\kappa$}(B);
\draw(D)-- node[right]{$2$}(C);
\end{tikzpicture}
\caption{Subgroup lattice for $R$}\label{fig:1}
\end{figure}
In particular, $Z$ is a proper subgroup of $C$ and of $M$. Since $C\nleq M$, there exists $a\in C\setminus Z$ and hence $R=M\langle a\rangle$. Let $m\in M\setminus Z$. Then $ma\in R\setminus M\subseteq C\cup C^-$. Since $a\in C$ and $m\notin C$, we deduce $ma^{-1}\in C^-$ and hence
$(ma^{-1})^\varphi=am^{-1}$. On the other hand, since $\varphi$ is an automorphism, we have $(ma^{-1})^\varphi=m^\varphi (a^{-1})^\varphi=m^\varphi a^{-1}$. From this, we obtain
\begin{equation}\label{eq:1}
m^\varphi=am^{-1}a,\qquad\forall m\in M\setminus Z, \forall a\in A\setminus Z.
\end{equation}

Let $m\in M\setminus Z$ and let $z\in Z$. Applying~\eqref{eq:1} with $m$ replaced by $mz$, we obtain $(mz)^\varphi=az^{-1}m^{-1}a$. On the other hand, since $\varphi$ is an automorphism, $(mz)^\varphi=m^\varphi z^\varphi=am^{-1}az$. It follows that
\begin{equation}\label{eq:2}
(am)z(am)^{-1}=z^{-1},\qquad\forall z\in Z, \forall m\in M\setminus Z.
\end{equation}
From~\eqref{eq:2}, $Z$ is abelian  because the automorphism given by the conjugation via $am$ inverts each of its elements. Now, fix for the rest of the proof $a\in A\setminus Z$.

Let $\kappa:=|M:Z|$ and let $m_0,m_1,\ldots,m_{\kappa-1}$ be a set of representatives for the cosets of $Z$ in $M$ with $m_0:=1$. For every $i,j\in \{1,\ldots,\kappa-1\}$ with $i\ne j$ and for every $z_i,z_j\in Z$, we have $m_iz_i,m_jz_j,m_iz_i(m_jz_j)^{-1}\in M\setminus Z$ and hence, from~\eqref{eq:1}, we obtain $$(m_iz_iz_j^{-1}m_j^{-1})^{\varphi}=am_jz_jz_i^{-1}m_i^{-1}a.$$ Since $\varphi$ is an automorphism, we have $$(m_iz_i(m_jz_j)^{-1})^\varphi=(m_iz_i)^\varphi ((m_jz_j)^{-1})^\varphi=az_i^{-1}m_i^{-1}a^{2}m_jz_ja.$$ Therefore
$m_jz_jz_i^{-1}m_i^{-1}=z_i^{-1}m_i^{-1}a^{2}m_jz_j$. Rearranging the terms of this equality, we obtain
\begin{equation}\label{eq:3}
a^2=[(m_iz_i)^{-1},(m_jz_j)^{-1}],\qquad \forall i,j\in \{1,\ldots,\kappa-1\} \textrm{ with }i\ne j, \forall z_i,z_j\in Z.
\end{equation}
Observe that the right hand side of~\eqref{eq:3} does not depend on $z_i$ and $z_j$, or on $i,j\in \{1,\ldots,\kappa-1\}$ with $i\ne j$.

\smallskip

Suppose  $\kappa=2$. From~\eqref{eq:2} we have $z^{am_1}=z^{-1}$, for every $z\in Z$. Moreover, since $M/Z$ and $C/Z$ are two distinct subgroups of $R/Z$, we deduce that $R/Z$ is not cyclic and hence $(am_1)^2\in Z$.

If $(am_1)^2=1$, then, for every $z\in Z$, by~\eqref{eq:2} we deduce $$(am_1z)^{\varphi}=(am_1z)^{-1}=z^{-1}m_1^{-1}a^{-1}=z^{-1}(am_1)^{-1}=z^{-1}(am_1)=(am_1)z=am_1z,$$ that is, $\varphi$ fixes $Zam_1$ pointwise. Thus $\varphi$ centralises $Zam_1$ and $Za$ and hence also $\langle Z,am_1,a\rangle=R$, contradicting the fact that $\varphi$ is not the identity automorphism. Therefore $(am_1)^2\ne 1$.

From~\eqref{eq:2} and from $(am_1)^2\ne 1$, it follows that all elements in $Zam_1$ square to $(am_1)^2$. As $|R:\langle Z,am_1\rangle|=2$, we deduce $\langle Z,am_1\rangle\unlhd R$. In particular, for every $g\in R$, $(am_1)^g\in Zam_1$ and hence $(am_1)^g$ squares to $(am_1)^2$, that is, $((am_1)^\varphi)^2=((am_1)^2)^g=(am_1)^2$. Therefore, $(am_1)^2\in \Zent R$.

We have shown that part~\eqref{thrmeq:2} holds (with the element $a$ in the statement of Theorem~\ref{thrm:1} part~\eqref{thrmeq:1} replaced by $am_1$ here).  In particular, for the rest of the proof we may suppose that $$\kappa\ge 3.$$

\smallskip

Applying~\eqref{eq:3} with $z_j=1$ and using the commutator formula $[xy,z]=[x,z]^y[y,z]$, we obtain
$$a^2=[z_i^{-1}m_i^{-1},m_j^{-1}]=[z_i^{-1},m_j^{-1}]^{m_i^{-1}}[m_i^{-1},m_j^{-1}]=[z_i^{-1},m_j^{-1}]^{m_i^{-1}}a^2,$$
and hence $[z_i^{-1},m_j^{-1}]=1$. Since $i$ is an arbitrary index in $\{1,\ldots,\kappa-1\}$, we deduce that $z_i\in \Zent M$ and since $z_i$ is an arbitrary element in $Z$, we deduce that $$Z\le \Zent M.$$

Recall the commutator formula $[y,x]=[x,y]^{-1}$. From~\eqref{eq:3} we deduce $$a^2=[m_2^{-1},m_1^{-1}]=[m_1^{-1},m_2^{-1}]^{-1}=(a^{2})^{-1}$$
and hence $o(a)\in \{2,4\}$.

As $Z\le \Zent M$, if $a^2=1$, then~\eqref{eq:3}  yields that $M$ is abelian. In particular, part~\eqref{thrmeq:1} holds from Lemma~\ref{example:1}. Therefore, we may suppose $$o(a)=4.$$ Now,~\eqref{eq:3} yields $$Z=\Zent M\textrm{ and }\gamma_2(M)=\langle a^2\rangle.$$

Assume there exists $i\in \{1,\ldots,\kappa-1\}$ with $m_i^2\notin Z$. Then $m_i^2z=m_j$, for some $j\in \{1,\ldots,\kappa-1\}$ and for some $z\in Z$. As $Z= \Zent M$,~\eqref{eq:3} yields $a^2=[m_i^{-1},m_j^{-1}]=[m_i^{-1},z^{-1}m_i^{-2}]=[m_i^{-1},m_i^{-2}]=1$, contradicting the fact that $a^2\ne 1$. Therefore $m_i^2\in Z$, for every $i\in \{1,\ldots,\kappa-1\}$ and hence $M/Z$ is an elementary abelian $2$-group and $\kappa$ is a power of $2$. In particular, $\kappa\ge 4$.

Suppose $\kappa> 4$. Choose $i,j,k\in \{1,\ldots,\kappa-1\}$ such that $m_i,m_j,m_im_j^{-1}$ are in distinct $Z$-cosets: observe that this is possible because $\kappa> 4$. Now~\eqref{eq:3} yields
\begin{eqnarray*}
a^2=[(m_im_j^{-1})^{-1},m_k^{-1}]=[m_jm_i^{-1},m_k^{-1}]=[m_j,m_k^{-1}]^{m_i^{-1}}[m_i^{-1},m_k^{-1}]=(a^2)^{m_i^{-1}}a^2
\end{eqnarray*}
and hence $a^2=1$, which is a contradiction. Therefore, $\kappa=4$. If $o(am)=2$ for every $m\in M$, then~\eqref{eq:1} yields $M\le \cent G\varphi= C$, which is a contradiction. Therefore, part~\eqref{thrmeq:3} holds.
\end{proof}

The core of our argument for the proof of Theorem~\ref{thrm:1} is the work in~\cite{DSV} on the automorphism group of Cayley graphs over abelian groups. We find that this is a useful paper for this type of investigations and recently it was also used for investigating the distinguishing number of certain Cayley graphs, see~\cite{BSS}.

\thebibliography{10}
\bibitem{babai1}L.~Babai, Finite digraphs with given regular automorphism groups, \textit{Periodica Mathematica Hungarica} \textbf{11} (1980), 257--270.

\bibitem{babai2}L.~Babai, W.~Imrich, Tournaments with given regular group, \textit{Aequationes Mathematicae} \textbf{19} (1979), 232--244.

\bibitem{BSS}N.~Balachandran, S.~Padinhatteeri, P.~Spiga, Vertex transitive graphs $G$ with $\chi_D(G)>\chi(G)$ and small automorphism group, \textit{Ars Math. Contemporanea}, to appear.
\bibitem{DSV}E.~Dobson, P.~Spiga, G.~Verret, Cayley graphs on abelian groups, \textit{Combinatorica} \textbf{36} (2016), 371--393.

\bibitem{WT1}J.~K.~Doyle, T.~W.~Tucker, and M.~E.~Watkins, Graphical Frobenius Representations, \textit{J. Algebraic Combin. }\textbf{48} (2018), 405--428.

\bibitem{DFS}J.-L.~Du, Y.~-Q.~Feng, P.~Spiga, On the existence and the enumeration of bipartite regular representations of Cayley graphs over abelian groups, \textit{submitted}.

\bibitem{F}P.~Fitzpatrick, Groups in which an automorphism inverts precisely half of the elements, \textit{Proc. Roy. Irish. Acad. Sect. A} \textbf{86} (1986), 81--89.

\bibitem{Godsil}C.~D.~Godsil, GRRs for nonsolvable groups, Algebraic methods in graph theory, Vol. I, II
(Szeged, 1978), pp. 221–239, Colloq. Math. Soc. J\'{a}nos Bolyai, Amsterdam-New York, 1981.

\bibitem{HeMa}P.~Hegarty, D.~MacHale, Two-groups in which an automorphism inverts precisely half of the elements, \textit{Bull. London Math. Soc. }\textbf{30} (1998), 129--135.

\bibitem{Hetzel}D.~Hetzel, \"{U}ber regul\"{a}re graphische Darstellung von aufl\"{o}sbaren Gruppen, Technische Universit\"{a}t, Berlin, 1976.

\bibitem{IW}W. Imrich, M. Watkins, On graphical regular representations of cyclic extensions of groups, \textit{Pacific J. Math. }\textbf{54} (1974), 1--17.

\bibitem{LM}H.~Liebeck, D.~MacHale, Groups with automorphisms inverting most elements, \textit{Math. Z.} \textbf{124} (1972), 51--63.

\bibitem{morrisspiga}J.~Morris, P.~Spiga, Every Finite Non-Solvable Group admits an Oriented Regular Representation, \textit{Journal of Combinatorial Theory Series B} \textbf{126} (2017), 198--234.

\bibitem{morrisspiga1}J.~Morris, P.~Spiga, Classification of finite groups that admit an oriented regular representation, \textit{Bull. London Math. Soc.} \textbf{50} (2018), 1--21.

\bibitem{nowitz1}L.~A.~Nowitz, Graphical regular representations of non-abelian groups. I. \textit{Canadian J. Math.} \textbf{24} (1972), 993--1008.

\bibitem{nowitz2}L.~A.~Nowitz, Graphical regular representations of non-abelian groups. II. \textit{Canadian J. Math.} \textbf{24} (1972),  1009--1018.
\bibitem{nowitz3}L.~A.~Nowitz, M.~E.~Watkins,  On graphical regular representations of direct products of groups, \textit{Monatsh. Math.} \textbf{76} (1972), 168--171.

\bibitem{LPSLPS}M.~W.~Liebeck, L.~Pyber, A.~Shalev, On a conjecture of G. E. Wall, \textit{J. Algebra} \textbf{317} (2007), 184--197.
\bibitem{Potter}W.~M.~Potter, Nonsolvable groups with an automorphism inverting many elements, \textit{Archiv der Mathematik} \textbf{50} (1988),  292--299.

\bibitem{spiga}P. Spiga, Finite groups admitting an oriented regular representation, \textit{J. Comb. Theory Series A} \textbf{153} (2018), 76--97.

\bibitem{spiga0}P.~Spiga, On the existence of Frobenius digraphical representations, \textit{Electron. J. Combin. }\textbf{25} (2018), no. 2, Paper~2.6, 19~pp.

\bibitem{spiga1}P.~Spiga, On the existence of graphical Frobenius representations and their asymptotic enumeration: an answer to the GFR conjecture, \textit{submitted}.

\bibitem{Wall}G.~E.~Wall, Some applications of the Eulerian functions of a finite group, \textit{J. Aust. Math. Soc.} \textbf{2} (1961), 35--59.

\bibitem{nowitz4}M.~E.~Watkins, On the action of non-Abelian groups on graphs, \textit{J. Combinatorial Theory Ser. B} \textbf{11} (1971), 95--104.

\bibitem{W2}M. E. Watkins, On graphical regular representations of $C_n \times Q$, in: Y. Alavi, D. R. Lick and A. T. White (eds.), Graph Theory and Applications, Springer, Berlin, volume 303 of Lecture Notes in Mathematics, pp.305--311, 1972, proceedings of the Conference at Western Michigan University, Kalamazoo, Michigan, May 10-13, 1972 (dedicated to the memory of J. W. T. Youngs).

\bibitem{W3} M. E. Watkins, Graphical regular representations of alternating, symmetric, and miscellaneous small groups, \textit{Aequationes Math.} \textbf{11} (1974), 40--50.
\end{document}